\renewcommand\eqref[1]{(\ref{#1})} 
\newtheoremstyle{theorem}
{10pt}          
{10pt}  
{\sl}  
{\parindent}     
{\bf}  
{. }    
{ }    
{}     
\theoremstyle{theorem}
\numberwithin{equation}{section}
\theoremstyle{plain}
\newtheorem{thm}{Theorem}[section]
\newtheorem{prop}[thm]{Proposition}
\newtheorem{cor}[thm]{Corollary}
\theoremstyle{definition}
\newtheorem{defn}[thm]{Definition}
\newtheorem{rem}[thm]{Remark}
\title[Generalized  fractional Dirac type operators]{Generalized  fractional Dirac type operators}
\author[J. E. Restrepo]{Joel E. Restrepo}
\address{
	Joel E. Restrepo:
	\endgraf
	Department of Mathematics: Analysis, Logic and Discrete Mathematics
	\endgraf
	Ghent University, Krijgslaan 281, Building S8, B 9000 Ghent
	\endgraf
	Belgium
	\endgraf
	{\it E-mail address} {\rm cocojoel89@yahoo.es; joel.restrepo@ugent.be}
}
\author[M. Ruzhansky]{Michael Ruzhansky}
\address{Michael Ruzhansky:
	\endgraf
	Department of Mathematics: Analysis, Logic and Discrete Mathematics
	\endgraf
	Ghent University, Krijgslaan 281, Building S8, B 9000 Ghent
	\endgraf
	Belgium
	\endgraf
	and
	\endgraf
	School of Mathematical Sciences
		\endgraf Queen Mary University of London 
			\endgraf
		United Kingdom
			\endgraf
	{\it E-mail address} {\rm Michael.Ruzhansky@ugent.be}
}
\author[D. Suragan]{Durvudkhan Suragan}
\address{
	Durvudkhan Suragan:
	\endgraf
	Department of Mathematics
	\endgraf
	Nazarbayev University
	\endgraf
	Kazakhstan
	\endgraf
	{\it E-mail address} {\rm durvudkhan.suragan@nu.edu.kz}}
\subjclass[2010]{47G20, 35R11, 	35R30}
\keywords{Fractional integro-differential operator, Cauchy problem, Dirac type operators, inverse problem.}
\thanks{The paper has been accepted for publication in the journal ``Fractional Calculus and Applied Analysis".}
\numberwithin{equation}{section}
\DeclareMathOperator*{\esssup}{ess\,sup}
\newcommand{\bx}{\boldsymbol{x}}
\newcommand{\bs}{\boldsymbol{s}}
\newcommand{\bk}{\boldsymbol{k}}
\newcommand{\bl}{\boldsymbol{\ell}}
\newcommand{\by}{\boldsymbol{y}}
\newcommand{\bq}{\boldsymbol{q}}
\newcommand{\bphi}{\boldsymbol{\phi}}
\begin{document}

\begin{abstract}
We introduce a class of fractional Dirac type operators with time variable coefficients by means of a Witt basis, the Djrbashian-Caputo fractional derivative and the fractional Laplacian, both operators defined with respect to some given functions. Direct and inverse fractional Cauchy type problems are studied for the introduced operators. We give explicit solutions of the considered fractional Cauchy type problems. We also use a recent method to recover a variable coefficient solution of some inverse fractional wave and heat type equations. Illustrative examples are provided.
\end{abstract} 

\maketitle
\tableofcontents

\section{Introduction}

The Dirac type operators have been well-known by their great impact and applications in Clifford analysis and PDE's, we refer to the books \cite{[4],[6],cerebook,otrobookd,[17]}, and also, just to mention a few, the papers \cite{[3a],calleje,2009,ferreira2017mmas,ferreira2017,[16a]}. For some works related to more general presentations and applications of Dirac type operators, see e.g. \cite{gene1,ferreira2018,gene2,generalizedintro}.

In this paper we follow, generalize and extend some of the ideas given in \cite{ferreira2017mmas,ferreira2017} where fundamental solutions (in the multidimensional case) of time-fractional telegraph, diffusion-wave and parabolic Dirac operators were obtained. In fact, the authors used some operational techniques via Fourier and Mellin transforms to get explicit representations of the solutions. In addition, they used a Witt basis to define a fractional Dirac type operator which allows one to factorize their equations and get explicit solutions in the setting of Clifford analysis. This idea was used in the classical case (standard derivatives) in \cite{calleje}. 

We mainly introduce a class of fractional Dirac type operators by means of a Witt basis that factorize a general fractional Laplace-type operator and involves the Djrbashian-Caputo fractional derivatives with respect to another function, the recent introduced  multidimensional fractional Laplacian with respect to a function \cite[Definition 4.9]{fractional-laplacian} and time variable  coefficients. Notice that a similar Dirac type operator was first introduced in \cite{BRS}. Nevertheless, in this article, we define the operator in a more general form that gives the possibility to obtain several results of a huge class of (fractional) partial differential equations which in particular recover the ones from \cite{BRS}. It is important to highlight that we cover heat and wave type equations as a special case. These type of Dirac operators can be very useful to analyze the solvability of the in-stationary Navier--Stokes equations as in  \cite{calleje}, as well as Maxwell equations, Lame equations, among others \cite{1spro,bookclifford1}. In some theoretical frames, our results and the generalized fractional Dirac type operators will allow (in the future) to explore different questions between fractional calculus and some topics like Clifford analysis, quantum mechanics, physics, etc \cite{[10a],[14a],[17a],[19a]}.

By using the new Dirac type operator we study some direct and inverse fractional Cauchy type problems in the setting of Clifford analysis. We consider Cauchy type problems of similar type as those in \cite{BRS,fractional-laplacian,generalizedCauchyRS}. Fractional direct and inverse Cauchy type problems have been studied by many authors since their applications and the intrinsic development of the fractional calculus theory. We refer, for instance, to the sources \cite{inve2,si5,inve5,inve4,inve6,RTT,si3,inve7,si1,inve1,si4} and references therein. The following books \cite{diethelm,kilbas,bookana,samko,fee} are of relevance as well. 
       
Let us now mention some features of the new Dirac type operator (see formula \eqref{classdiracnew}) which have not been considered anywhere else: 
\begin{enumerate}
    \item The fractional operators are considered with respect to another function and complex orders. In this context, the fractional integral was introduced by Holmgren (1866) with somehow influence of Liouville's work (1835). Properties and other features of this integral were considered by Sewell (1937), Shelkovnikov (1951) and Chrysovergis (1971).  While, the differential operator was introduced by Erd\'elyi (1964,1970) as well as Talenti (1965). We also point out that fractional integral of this type in the complex plane was considered by Osler (1970,1972). All details of these historical notes and other information can be found in \cite[Notes to 18.2, pp. 431-432]{samko}. These operators give the possibility to obtain results e.g. of Hadamard fractional calculus, Erd\'elyi-Kober fractional calculus and the classical one. More details can be found in Subsection \ref{intro-fc}. Also, some  other classes of different types can be considered by varying the given function. On the other hand, the complex fractional orders give a different nature (not well-known yet) to the considered operator. These type of orders were studied by Kober in \cite{kober} and Love in \cite{[11],[12]}. Fractional differential-integral operators played an important role for hypergeometric integral equations whose solutions involved fractional integrals and derivatives of complex orders, see e.g. the following classical works \cite{[11],[12],ross}, some more recent ones such as \cite{r3,r1,r4,r2} and references therein.  
    \item The multidimensional fractional Laplacian with respect to a given function (see Subsection \ref{Secsub:Fwrtfn}). This operator was recently introduced in \cite{fractional-laplacian} and it is still under review for its potential for future applications. At least here we give one. In particular it becomes the classical fractional Laplacian.  
    \item Time variable coefficients. Usually operators are considered with constant coefficients, which gives more freedom to work and analyze different objects. Nevertheless, we handle and allow here to consider continuous time variable  coefficients.   
\end{enumerate}
                     
The paper is organized as follows: In Section \ref{preli}, we recall some facts and definitions on the fractional Laplacian, Fourier transform, fractional Cauchy type equations and Clifford analysis. Section \ref{mainresults} is devoted to the main results of the paper. Indeed, by using a class of generalized time-fractional Dirac type operators, we study fractional Cauchy type problems and give their explicit solutions. Some examples are also given. In Section \ref{special} we discuss some special cases of the introduced Dirac type operators. While, in Section \ref{directpro}, we study some inverse fractional wave and heat type equations. We complement it with some examples.

\section{Preliminaries}\label{preli} 

In this section we collect several definitions and results about the fractional Laplacian, Fourier transform, fractional Cauchy type equations and Clifford analysis, which will be needed throughout the paper.   

\subsection{The fractional Laplacian}

We first fix some notations and recall some known results regarding the fractional Laplacian. These results can be found for example in \cite{Gris,laplacian-book1,laplacian-book2} and the references therein.

We recall some standard definitions of function spaces on $\mathbb{R}^n$. For any $p\in[1,+\infty)$, the $L^p$ space is defined as
\[
L^{p}(\mathbb{R}^n)\coloneqq \left\{f:\mathbb{R}^n\to\mathbb{R}\;\mbox{ measurable such that}\; \int_{\mathbb{R}^n}|f(\bx)|^{p}\;\mathrm{d}\bx<+\infty\right\},
\]
which is a Banach space under the standard $L^p$ norm. For $p=+\infty$, as usual we have:
\[
L^{\infty}(\mathbb{R}^n)\coloneqq \left\{f:\mathbb{R}^n\to\mathbb{R}\;\mbox{ measurable such that}\; \displaystyle\esssup_{\bx\in\mathbb{R}^n}|f(\bx)|<+\infty\right\},
\]
which is also a Banach space.

The weighted $L^{p}$ spaces can be defined, in particular, for any non-negative measurable function $g:\mathbb{R}^n\to\mathbb{R}$ as follows:
\[
L^{p}(\mathbb{R}^n,g\,\mathrm{d}\bx)\coloneqq \left\{f:\mathbb{R}^n\to\mathbb{R}\;\mbox{ measurable and}\; \int_{\mathbb{R}^n}|f(\bx)|^{p}g(\bx)\;\mathrm{d}\bx<+\infty\right\},
\]
endowed with the norm given by the $(1/p)$th power of the above integral. Here $g$ can be taken as the Radon--Nikod\'ym derivative of one $\sigma$-finite measure with respect to another, transforming between two Banach spaces with different weights.

The Schwartz space $\mathcal{S}(\mathbb{R}^n)$ is the space of smooth and rapidly decreasing functions:
\[
\mathcal{S}(\mathbb{R}^n)\coloneqq \left\{f\in C^{\infty}(\mathbb{R}^n)\;\mbox{ and}\; \sup_{\bx\in\mathbb{R}^n}\left|\bx^{\bk}\partial^{\bl}f(\bx)\right|<+\infty\;\mbox{ for all}\;\bk,\bl\in\mathbb{N}^n\right\},
\]
and the topology on this space is defined by the following semi-norms:
\[
p_{N}(f)=\sup _{\bx \in \mathbb{R}^{n}}(1+|\bx|)^{N} \sum_{|\bk| \leqslant N}\left|\partial^{\bk}f(\bx)\right|,\quad f\in\mathcal{S}(\mathbb{R}^n),
\]
where $N$ can take any value in $\mathbb{Z}^+_0$. The $L^p$ and Schwartz spaces are useful and basic framework to work with the fractional Laplacian and similar operators.

To give a proper definition of the fractional Laplacian via the principal value integral, we use the following function space for $0<s<1$ and $n\in\mathbb{N}$:
\[
L_s^{1}(\mathbb{R}^n)\coloneqq \left\{f:\mathbb{R}^n\to\mathbb{R}\;\mbox{ measurable such that}\; \int_{\mathbb{R}^n}\frac{|f(\bx)|}{(1+|\bx|)^{n+2s}}\;\mathrm{d}\bx<+\infty\right\}.
\]
For $f\in L_s^{1}(\mathbb{R}^n)$ and $\varepsilon>0$, we set
\begin{align*}
(-\Delta)_\varepsilon^sf(x)\coloneqq C_{n,s}\int_{\{\by\in\mathbb{R}^n:\;|\bx-\by|>\varepsilon\}}\frac{f(\bx)- f(\by)}{|\bx-\by|^{n+2s}}\;\mathrm{d}\by,\qquad\bx\in\mathbb{R}^n,
\end{align*}
where $C_{n,s}$ is a normalization constant, usually given by $C_{n,s}\coloneqq \frac{s2^{2s}\Gamma\left(s+\frac{n}{2}\right)}{\pi^{n/2}\Gamma(1-s)}.$ Thus, we can define the fractional Laplacian $(-\Delta)^s$ by the following integral:
\[
(-\Delta)^sf(\bx)\coloneqq C_{n,s}\,\mbox{P.V.}\int_{\mathbb{R}^n}\frac{f(\bx)-f(\by)}{|\bx-\by|^{n+2s}}\;\mathrm{d}\by = \lim_{\varepsilon\downarrow 0}(-\Delta)_\varepsilon^sf(\bx),
\]
whenever the limit exists for a.e. $\bx\in\mathbb{R}^n$ (\cite{NPV}).

\medskip In the next two subsections we recall several notions and facts on the Fourier transform and fractional Laplacian with respect to another function recently introduced in \cite{fractional-laplacian}.

\subsection{The Fourier transform and fractional Laplacian with respect to a given function} 

\subsubsection{The $n$-dimensional Fourier transform with respect to $\bphi$} \label{Secsub:Fwrtfn}

\begin{defn}\label{Def:Fwrtfn}
Let $n\in\mathbb{N}$, and let $\bphi:\mathbb{R}^n\to\mathbb{R}^n$ be a bijection all of whose $1$st-order partial derivatives exist almost everywhere. The $n$-dimensional Fourier transform with respect to $\bphi$, of a function $f:\mathbb{R}^n\to\mathbb{C}$ such that this integral exists, is defined by
\[
\big[\mathcal{F}_{\bphi}f\big](\bk)=\frac{1}{(2\pi)^{n/2}}\int_{\mathbb{R}^n}e^{-i\bk\cdot\bphi(\bx)}f(\bx)\big|J\bphi(\bx)\big|\,\mathrm{d}\bx,\quad \bk\in\mathbb{R}^n,
\]
where $J\bphi$ is the Jacobian determinant of the function $\bphi$. In particular, for $\bphi(\bx)=\bx$, we recover the classical Fourier transform in $\mathbb{R}^n$ which we denote by $\mathcal{F}.$ 
\end{defn}

Now we recall some basic properties of the above operator. Below we frequently use the operator  $Q_{\phi}$ of the right composition with $\phi$:
\[
\label{Qphi}
Q_{\phi}(f)=f\circ\phi,\qquad\text{ i.e. }\qquad(Q_{\phi}f)(x)=f(\phi(x)).
\]

\begin{prop}\cite[Proposition 3.19]{fractional-laplacian} \label{Prop:Fwrtfn:oprel}
The $n$-dimensional Fourier transform with respect to another function is related to the classical Fourier transform by the following equality:
\[
\mathcal{F}_{\bphi}=\mathcal{F}\circ Q_{\bphi}^{-1},
\]
where $Q_{\bphi}^{-1}$ is the operator of right composition with $\phi^{-1}$, i.e. $(Q_{\phi}^{-1}f)(\bx)=f(\phi^{-1}(\bx)).$
\end{prop}

\begin{prop}\cite[Proposition 3.20]{fractional-laplacian}
The $n$-dimensional inverse Fourier transform with respect to $\bphi$ is given by
\[
\big[\mathcal{F}_{\bphi}^{-1}g\big](\bx)=\frac{1}{(2\pi)^{n/2}}\int_{\mathbb{R}^n}e^{i\bk\cdot\bphi(\bx)}g(\bk)\,\mathrm{d}\bk,
\]
which means that $g=\mathcal{F}_{\bphi}f\iff f=\mathcal{F}_{\bphi}^{-1}g$.
\end{prop}

\subsubsection{\bf The $n$-dimensional Fourier convolution operator with respect to $\bphi$}

\begin{defn}
Let $n\in\mathbb{N}$, and let $\bphi:\mathbb{R}^n\to\mathbb{R}^n$ be a bijection all of whose $1$st-order partial derivatives exist almost everywhere. The Fourier-type $\bphi$-convolution of two functions $f$ and $g$ in the space $L^1(\mathbb{R}^n,\mathrm{d}\bphi)$ is defined as follows:
\[
h(\bx)=\big(f*_{\bphi}g\big)(\bx)=\int_{\mathbb{R}^n}f\big(\bphi^{-1}\big(\bphi(\bx)-\bphi(\by)\big)\big)g(\by)\big|J\bphi(\by)\big|\,\mathrm{d}\by.
\]
\end{defn}

\subsubsection{\bf The $n$-dimensional fractional Laplacian with respect to $\bphi$} \label{Secsub:FLwrtfn}

\begin{defn}\label{Def:FLnwrtf}
Let $n\in\mathbb{N}$, and let $\bphi:\mathbb{R}^n\to\mathbb{R}^n$ be a bijection all of whose $1$st-order partial derivatives exist almost everywhere. The $n$-dimensional fractional Laplacian with respect to $\bphi$, of a function $f$ defined on $\mathbb{R}^n$, is
\[
\left(-\Delta\right)^{\alpha/2}_{\bphi(\bx)}f(\bx)=C_{n,\alpha/2}\,\mbox{P.V.}\int_{\mathbb{R}^n}\frac{f(\bx)-f(\by)}{\big|\bphi(\bx)-\bphi(\by)\big|^{n+\alpha}}\big|J\bphi(\by)\big|\,\mathrm{d}\by,
\]
where $0<\alpha<2$, $\bx\in\mathbb{R}^n$ and $C_{n,\alpha/2}=\frac{\alpha2^{\alpha-1}\Gamma\left(\frac{\alpha+n}{2}\right)}{\pi^{n/2}\Gamma\left(1-\frac{\alpha}{2}\right)}.$
\end{defn}

\begin{rem}
In the $1$-dimensional case, the Definition \ref{Def:FLnwrtf} of the fractional Laplacian with respect to $\bphi:\mathbb{R}\to\mathbb{R}$ coincides up to some multiplicative factor (which vary under some parameters) with the Riesz-Feller fractional derivatives. We recommend to have a look on \cite[Section 2]{inve6} and also \cite[Section 3]{fractional-laplacian} to see clearly the connection. Nevertheless, in higher dimensions, this definition seems new.        
\end{rem}

Let us recall two important results about the above operator.

\begin{thm}\cite[Theorem 4.11]{fractional-laplacian} \label{Thm:MFLn:conjug}
The $n$-dimensional fractional Laplacian with respect to a function is related to the classical fractional Laplacian by the following conjugation relation:
\begin{align*}
\left(-\Delta\right)^{\alpha/2}_{\bphi(\bx)}=Q_{\bphi}\circ\left(-\Delta\right)^{\alpha/2}\circ Q_{\bphi}^{-1}, 
\end{align*}
where $0<\alpha<2$ and $\bphi:\mathbb{R}^n\to\mathbb{R}^n$ is a bijection all of whose $1$st-order partial derivatives exist almost everywhere.
\end{thm}

\begin{thm}\cite[Theorem 4.14]{fractional-laplacian}\label{flwrf}
Let $\bphi:\mathbb{R}^n\to\mathbb{R}^n$ be a bijection all of whose $1$st-order partial derivatives exist almost everywhere. The fractional Laplacian with respect to $\bphi$ and the Fourier transform with respect to $\bphi$ are related by the following identity:
\[
\mathcal{F}_{\bphi}\big[\left(-\Delta\right)^{\alpha/2}_{\bphi(\bx)}f(\bx)\big]=|\bk|^{\alpha}\mathcal{F}_{\bphi}f(\bk),
\]
for any $f\in L^p(\mathbb{R}^n,\mathrm{d}\bphi)$, $0<\alpha<2$, $1\leqslant p\leqslant 2$.
\end{thm}

\subsection{Fractional integro-differential operators} \label{intro-fc}
Now we recall some definitions and properties of the fractional integro \- differential operators with respect to another function, see e.g. \cite[Chapter 4]{samko}, also \cite{kilbas}.  

Below we fix a finite interval $[a,T]\subseteq\mathbb{R}$ and work over the following function spaces:
\begin{align*}
AC[a,T]&=\left\{f:[a,T]\to\mathbb{R}\;:\;f\text{ absolutely continuous on }[a,T]\right\}; \\
AC^n[a,T]&=\left\{f:[a,T]\to\mathbb{R}\;:\;f^{(n-1)}\text{ exists and in }AC[a,T]\right\},\qquad n\in\mathbb{N}.
\end{align*}

\begin{defn} 
Let $\alpha\in\mathbb{C}$, $\rm{Re}(\alpha)>0$, $-\infty\leqslant a<b\leqslant\infty$, $f\in L^1[a,b]$, and let $\psi\in C^1[a,b]$ be such that $\psi^{\prime}(t)>0$ for all $t\in[a,b]$. The left-sided Riemann-Liouville fractional integral of $f$ with respect to another function $\psi$ is defined by \cite[formula (2.5.1)]{kilbas}:
\[
I_{a+}^{\alpha,\psi}f(t)=\frac1{\Gamma(\alpha)}\int_a^t \psi^{\prime}(s)(\psi(t)-\psi(s))^{\alpha-1}f(s){\rm d}s.
\]
\end{defn}

\begin{defn}
Let $\alpha\in\mathbb{C}$, $\rm{Re}(\alpha)\geqslant0$, $-\infty\leqslant a<b\leqslant\infty$, and let $\psi\in C^1[a,b]$ be such that $\psi^{\prime}(t)>0$ for all $t\in[a,b]$. The left-sided Riemann-Liouville fractional derivative of a function $f$ with respect to another function $\psi$ is defined by  \cite[Formula 2.5.17]{kilbas}:
\[
D_{a+}^{\alpha,\psi}f(t)=\left(\frac{1}{\psi^{\prime}(t)}\frac{\rm d}{{\rm d}t}\right)^n \big(I_{a+}^{n-\alpha,\psi}f\big)(t),\quad f\in AC^n[a,b],
\]
where $n=\lfloor\rm{Re}(\alpha)\rfloor +1$ and  $\lfloor \cdot\rfloor$ is the floor function ($n-1\leqslant \rm{Re}(\alpha)<{\it n}$).
\end{defn}
Everywhere below we always assume that $\psi\in C^1[a,b]$ is such that $\psi^{\prime}(t)>0$ for all $t\in[a,b]$ when we use the operators $I_{a+}^{\alpha,\psi}$ or $D_{a+}^{\alpha,\psi}$.  

Let us recall a result which will be useful in some examples in the next sections. Taking into account \cite[Theorem 2.4]{samko} it can be proved similarly that the following statement holds. \begin{thm}\label{dirl}
If $\alpha\in\mathbb{C}$ $(\rm{Re}(\alpha)>0)$ and $f\in L^1(a,b)$, then
\[D_{a+}^{\alpha,\psi}I_{a+}^{\alpha,\psi}f(t)=f(t)\]
holds almost everywhere on $[a,b]$.
\end{thm}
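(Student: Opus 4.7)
The plan is to mimic the classical argument for the Riemann--Liouville operators, reducing the statement to \cite[Theorem~2.4]{samko} via the change of variable $u=\phi(t)$. The computation splits naturally into three ingredients: a semigroup law for the $\phi$-integrals, the definition of $D_{a+}^{\alpha,\phi}$, and a classical antiderivative identity.

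First I would establish the semigroup property
\[
I_{a+}^{\alpha,\phi}I_{a+}^{\beta,\phi}f = I_{a+}^{\alpha+\beta,\phi}f,\qquad \text{Re}(\alpha),\text{Re}(\beta)>0,\ f\in L^1(a,b).
\]
This can be done directly: insert the definition \eqref{fraci} twice, swap integrals by Fubini, and reduce the inner integral to a Beta integral via the substitution $\tau=(\phi(r)-\phi(s))/(\phi(t)-\phi(s))$. Alternatively, and more efficiently, one observes that setting $g(u):=f(\phi^{-1}(u))$ and performing $v=\phi(s)$ in \eqref{fraci} yields
\[
I_{a+}^{\alpha,\phi}f(t) = \bigl(I_{\phi(a)+}^{\alpha}g\bigr)(\phi(t)),
\]
so $I_{a+}^{\alpha,\phi}$ is conjugate to the classical Riemann--Liouville integral and inherits its semigroup property from \cite[Theorem~2.5]{samko}.

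With $n=\lfloor\text{Re}(\alpha)\rfloor+1$ one has $\text{Re}(n-\alpha)>0$ (the edge case $\alpha=n$ being trivial, since then $I_{a+}^{0,\phi}$ is the identity), so the semigroup law with $\beta=n-\alpha$ gives
\[
I_{a+}^{n-\alpha,\phi}I_{a+}^{\alpha,\phi}f = I_{a+}^{n,\phi}f.
\]
Substituting into \eqref{fracd} yields
\[
D_{a+}^{\alpha,\phi}I_{a+}^{\alpha,\phi}f(t)=\left(\frac{1}{\phi'(t)}\frac{d}{dt}\right)^{\!n}I_{a+}^{n,\phi}f(t),
\]
and it only remains to show that this right-hand side equals $f(t)$ almost everywhere. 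Using the same conjugation, under $u=\phi(t)$ the operator $\frac{1}{\phi'(t)}\frac{d}{dt}$ becomes the ordinary derivative $\frac{d}{du}$, while $I_{a+}^{n,\phi}f(t)$ becomes $(I_{\phi(a)+}^{n}g)(u)$ with $g=f\circ\phi^{-1}$. The identity then reduces to the classical fact that $n$-fold iterated differentiation inverts the $n$-fold iterated integral of an $L^1$ function almost everywhere, a direct consequence of the Lebesgue differentiation theorem applied $n$ times.

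The main obstacle, and really the only nontrivial point, is to verify that this change-of-variable reduction is legitimate. One must check that $f\in L^1(a,b)$ transforms into $g\in L^1(\phi(a),\phi(b))$ and that the differential and integral manipulations commute with the substitution. This is harmless on compact intervals, where $\phi'$ is bounded away from $0$ and from $\infty$ by continuity; on the unbounded case a truncation argument is needed, restricting first to $[a+\varepsilon,b-\varepsilon]$ and passing to the limit. Once this reduction is justified, the theorem follows from the classical Riemann--Liouville statement \cite[Theorem~2.4]{samko}.
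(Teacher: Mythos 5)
Your proposal is correct and follows essentially the same route as the paper, which offers no detailed argument but simply asserts that the statement "can be proved similarly" to \cite[Theorem~2.4]{samko}; your conjugation $I_{a+}^{\alpha,\phi}f(t)=\bigl(I_{\phi(a)+}^{\alpha}(f\circ\phi^{-1})\bigr)(\phi(t))$ together with the observation that $\tfrac{1}{\phi'(t)}\tfrac{d}{dt}$ transforms into $\tfrac{d}{du}$ is precisely the standard way to make that reduction rigorous. (Minor remark: with $n=\lfloor\mathrm{Re}(\alpha)\rfloor+1$ one always has $\mathrm{Re}(n-\alpha)>0$, so the "edge case" you flag only arises for the ceiling convention $n=-\lfloor-\mathrm{Re}(\alpha)\rfloor$ with $\alpha$ a positive integer, where your remark that $I_{a+}^{0,\phi}$ is the identity disposes of it.)
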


In this paper we will use the following modified fractional derivative with respect to another function:
\begin{equation}\label{alternative}
^{C}D_{0+}^{\alpha,\psi}f(t)=D_{0+}^{\alpha,\psi}\biggl[f(t)-\sum_{j=0}^{n-1}\frac{\big(\psi(t)-\psi(0)\big)^j}{j!}\cdot\frac{\mathrm{d}^j}{\mathrm{d}\psi(t)^j}f(t)\big|_{_{_{t=0+}}}\big.\biggr],
\end{equation}
where $\alpha\in\mathbb{C}$, $\rm{Re}(\alpha)\geqslant0$, $n=\lfloor\rm{Re}(\alpha)\rfloor +1$ for $\alpha\notin\mathbb{N}$ and $n=\alpha$ for $\alpha\in\mathbb{N}$. 

\begin{rem}
In this paper we consider the initial point of the fractional operators to be zero. Nevertheless, it can be taken any positive real number.
\end{rem}

Note that for $\psi(t)=t$, $^{C}D_{0+}^{\alpha,\psi}f(t)$ becomes the modified fractional derivative used in \cite[formula (1.3)]{kilbas2004}. We also have: If $\alpha>0$ and $f\in AC^n[a,b]$, then $^{C}D_{0+}^{\alpha,\psi}$ of \eqref{alternative} becomes the so-called Djrbashian-Caputo fractional derivative: 
\begin{equation}\label{Capder}
^{C}{\mathcal D}^{\alpha,\psi}_{0+} {f(t)}=I_{0+}^{n-\alpha,\psi}\left(\frac1{\psi^{\prime}(t)}\frac{\rm d}{{\rm d}t}\right)^n f(t),
\end{equation}
where $n=\lfloor \alpha\rfloor+1$ for $\alpha\notin\mathbb{N}$ and $n=\alpha$ for $\alpha\in\mathbb{N}$. 

We must mention that the existence of the fractional derivative \eqref{Capder} is guaranteed by $f^{(n)}\in L^1[a,b]$. And, the stronger condition $f\in C^n[a,b]$ gives the continuity of the derivative. Furthermore, for $\alpha=n$, we have
\[
^{C}{\mathcal D}_{0+}^{\alpha,\psi}f(t)=\left(\frac1{\psi^{\prime}(t)}\frac{\rm d}{{\rm d}t}\right)^n f(t).
\]
For $\alpha=n$ and $\psi(t)=t$, it follows that $\prescript{C}{}{\mathcal D}_{0+}^{\alpha,\psi}f(t)=D^n f(t)=f^{(n)}(t).$

We use the Djrbashian-Caputo type derivatives instead of Riemann--Liouville type because of the variant behavior in initial conditions which are important in physical interpretations \cite{diethelm,physical}.

It is important to highlight here that  definitions \eqref{alternative} and \eqref{Capder} coincide for any function $f\in AC^n[a,T]$, see e.g. \cite[Theorem 3.1]{diethelm} or \cite[Theorem 2.2]{samko}. The crucial and main difference between both definitions is the possibility to define \eqref{alternative} in a larger function space than $AC^n[a,T]$. In fact, it is possible to define the Riemann--Liouville derivative on such larger function spaces. For a basic illustration of it, see \cite{example}, where some examples are shown on functions which have no first order derivative but have Riemann--Liouville fractional derivatives of all orders less than one.

Below we mention three special cases of the fractional calculus with respect to another function. More examples and properties can be found in \cite{almeida}. 

\begin{itemize}
\item For $\psi(x)=x$, fractional calculus with respect to this function is the same as the original fractional calculus.
\item For $\psi(x)=\log(x)$, fractional calculus with respect to this function is known as Hadamard fractional calculus. The operators of Hadamard fractional calculus are:
\begin{align*}
\prescript{H}{}I^{\alpha,\,\log(x)}_{0+}f(x)&=\frac{1}{\Gamma(\alpha)}\int_0^x\left(\log\frac{x}{y}\right)^{\alpha-1}\frac{f(y)}{y}\,\mathrm{d}y, \\
\prescript{HR}{}D^{\alpha,\,\log(x)}_{0+}f(x)&=\left(x\cdot\frac{\mathrm{d}}{\mathrm{d}x}\right)^n\prescript{H}{}I^{n-\alpha,\,\log(x)}_{0+}f(x), \\
\prescript{HC}{}D^{\alpha,\,\log(x)}_{0+}f(x)&=\prescript{H}{}I^{n-\alpha,\,\log(x)}_{0+}\left(x\cdot\frac{\mathrm{d}}{\mathrm{d}x}\right)^nf(x).
\end{align*}
\item For $\psi(x)=x^{\rho}$, fractional calculus with respect to this function was proposed by Erd\'elyi \cite{erdelyi} in 1964. The operators here are:
\begin{align*}
\prescript{RL}{}I^{\alpha,\,x^{\rho}}_{0+}f(x)&=\frac{\rho}{\Gamma(\alpha)}\int_0^x\big(x^{\rho}-y^{\rho}\big)^{\alpha-1}y^{\rho-1}f(y)\,\mathrm{d}y, \\
\prescript{RL}{}D^{\alpha,\,x^{\rho}}_{0+}f(x)&=\left(\frac{1}{\rho x^{\rho-1}}\cdot\frac{\mathrm{d}}{\mathrm{d}x}\right)^n\prescript{RL}{}I^{n-\alpha,\,x^{\rho}}_{0+}f(x), \\
\prescript{C}{}D^{\alpha,\,x^{\rho}}_{0+}f(x)&=\prescript{RL}{}I^{n-\alpha,\,x^{\rho}}_{0+}\left(\frac{1}{\rho x^{\rho-1}}\cdot\frac{\mathrm{d}}{\mathrm{d}x}\right)^nf(x).
\end{align*}
\end{itemize}

When additional weight parameter is added above, these are the so-called Erd\'{e}lyi-Kober (generalized) operators of fractional calculus, introduced by Sneddon in 1966 and studied further by Kiryakova \cite[Chapter 2]{kir}.

\subsection{Fractional Cauchy type problem} Here we recall some useful results from \cite{fractional-laplacian} that will help us to prove our main results in the next sections. 

We use the following formula for the Djrbashian-Caputo fractional derivative of a function with respect to another function in the time domain:
\[
\prescript{C}{}\partial^{\alpha,\psi}_{t}w(\bx,t)=D^{\alpha,\psi}_{0+}\left[w(\bx,t)-\sum_{j=0}^{n-1}\frac{\big(\psi(t)-\psi(0)\big)^j}{j!}\cdot\frac{\mathrm{d}^j}{\mathrm{d}\psi(t)^j}w(\bx,t)\big|_{_{_{t=0+}}}\big.\right],
\]
where $\alpha\in\mathbb{C}\setminus\mathbb{Z}$, $\rm{Re}(\alpha)\geqslant0$, $\bx\in\mathbb{R}^n$, $t\in(0,T]$, $n=\lfloor\rm{Re}(\alpha)\rfloor+1$ and $\psi\in C^1[0,T]$ is such that $\psi^{\prime}(t)>0$ for all $t\in[0,T].$ 

\subsubsection{\bf The variable-coefficient case} 
We consider the following space-time fractional Cauchy problem which involves both fractional time derivatives with respect to $\psi$ and the fractional Laplacian with respect to $\bphi$:
\begin{equation}\label{eq1Cauchynew}
\begin{split}
\prescript{C}{}\partial^{\beta_0,\psi}_{t}w(\bx,t)+\sum_{i=1}^{m-1}\rho_i(t) \prescript{C}{}\partial^{\beta_i,\psi}_{t}w(\bx,t)+\rho_{m}(t)\left(-\Delta\right)^{\alpha/2}_{\bphi(x)} w(\bx,t)&=h(\bx,t), \\
w(\bx,0)&=w_0(\bx), \\
\frac{\partial w}{\partial\psi(t)}(\bx,t)|_{_{_{t=0}}}&=w_1(\bx),\\
&\hspace{0.2cm}\vdots \\
\frac{\partial^{n_0-1}w}{\partial\psi(t)^{n_0-1}}(\bx,t)|_{_{_{t=0}}}&=w_{n_0-1}(\bx),
\end{split}
\end{equation}
where $t\in (0,T]$, $\bx\in\mathbb{R}^n$, $h(\cdot,t),\rho_i(t)\in C[0,T]$ $(1\leqslant i\leqslant m)$, $0<\alpha<2$, $\beta_i\in\mathbb{C}$ with $\rm{Re}(\beta_{\it i})>0$ for $0\leqslant i\leqslant m-1$, $\rm{Re}(\beta_0)>\rm{Re}(\beta_1)>\cdots>\rm{Re}(\beta_{\it m-1})>0$, $\Re (\beta_0)\not\in\mathbb{Z}$ and $n_i=\lfloor \rm{Re}\beta_{\it i}\rfloor+1$. We also suppose that $h(\bx,\cdot)\in L^1(\mathbb{R}^n,\mathrm{d}\bphi)$, $\mathcal{F}_{\bphi}h(\bx,\cdot)\in L^1(\mathbb{R}^n)$, and $w_{k}\in L^1(\mathbb{R}^n,\mathrm{d}\bphi)$, $\mathcal{F}_{\bphi}w_{k}\in L^1(\mathbb{R}^n)$, for $0\leqslant k\leqslant n_0-1$.

Moreover, we also need to assume that (for $\bk\in\mathbb{R}^n$)
\[
\sum_{k=0}^{+\infty}(-1)^{k}I^{\beta_0,\psi}_{0+}\left(\sum_{i=1}^{m-1}\rho_i(t)I^{\beta_0-\beta_i,\psi}_{0+}+|\bk|^{\alpha}\rho_m(t)I^{\beta_0,\psi}_{0+} \right)^{k}\mathcal{F}_{\bphi}h(\bk,t)\in L^1(\mathbb{R}^n).
\]

A solution of the space-time fractional Cauchy problem \eqref{eq1Cauchynew} is given in the space
\[
C^{n_0-1,\beta_0}[0,T]:=\left\{g(t)\in C^{n_0-1}[0,T]\;:\; \prescript{C}{}D^{\beta_0,\psi}_{0+}g(t)\in C[0,T]\right\},
\]
endowed with the norm
\[
\|g\|_{C^{n_0-1,\beta_0}[0,T]}=\sum_{k=0}^{n_0-1}\left\|\frac{\mathrm{d}^kg}{\mathrm{d}\psi(t)^k}\right\|_{C[0,T]}+\big\|\prescript{C}{}D^{\beta_0,\psi}_{0+}g\big\|_{C[0,T]}.
\] 
Now, for each $j=0,\ldots,n_0-1$, we define the function
\begin{equation}\label{PSI}
\Psi_j(t)=\frac{(\psi(t)-\psi(0))^{j}}{\Gamma(j+1)},\quad j\in{\mathbb N}\cup\{0\},
\end{equation}
and the set
\[
\mathbb{K}_j:=\{i:0\leqslant\rm{Re}(\beta_{\it i})\leqslant{\it j}, {\it i}=1,\dots,{\it m}\},\quad {\it j}=0,1,\dots,{\it n}_0-1,
\]
with $\kappa_j=\min\{\mathbb{K}_j\}$ if $\mathbb{K}_j\neq\emptyset$. Note that the inclusion $s\in\mathbb{K}_j$ implies $\rm{Re}(\beta_{\it s})\leqslant\it{j}$, while $\mathbb{K}_{j_1}\subset\mathbb{K}_{j_2}$ for $j_1<j_2$. Also, if $\beta_m=0$, then $\mathbb{K}_j\neq\emptyset$ for all $j=0,1,\ldots,n_0-1$. Following \cite{fractional-laplacian,RRS}, we define
\begin{align*}
\mathcal{K}_{j}&\left(t,|\bk|^{\alpha},\rho_1,\ldots,\rho_{m}\right):= \nonumber\\
&\sum_{k=0}^{+\infty} (-1)^{k+1}I^{\beta_0,\psi}_{0+}\left(\sum_{i=1}^{m}d_i(t)I^{\beta_0-\beta_i,\psi}_{0+}\right)^k \sum_{i=1}^{m}d_i(t)D^{\beta_i,\psi}_{0+}\Psi_j(t),
\end{align*}
and
\begin{align}
\mathcal{K}_{j}^{\kappa_j}&\left(t,|\bk|^{\alpha},\rho_1,\ldots,\rho_{m}\right):= \nonumber \\
&\sum_{k=0}^{+\infty} (-1)^{k+1}I^{\beta_0,\psi}_{0+}\left(\sum_{i=1}^{m}d_i(t)I^{\beta_0-\beta_i,\psi}_{0+}\right)^k \sum_{i=\kappa_j}^{m}d_i(t)D^{\beta_i,\psi}_{0+}\Psi_j(t), \label{kjkappa}
\end{align}
where $d_{m}(t)=|\bk|^{\alpha}\rho_{m}(t)$ and $d_i(t)=\rho_{i}(t)$ for $1\leqslant i\leqslant m-1$. We also define
\[
\mathcal{G}\big((\mathcal{F}_{\bphi}h)(\bk,t)\big):=\sum_{k=1}^{+\infty}(-1)^{k}I_{0+}^{\beta_0,\psi}\left(\sum_{i=1}^{m}d_i(t)I_{0+}^{\beta_0-\beta_i,\psi}\right)^{k}(\mathcal{F}_{\bphi}h)(\bk,t).
\] 
We always require that the kernels $\mathcal{K}_j^{\kappa_j}$ and $\mathcal{K}_j$ $(0\leqslant j\leqslant n_0-1)$ and $\mathcal{G}\big((\mathcal{F}_{\bphi}h)(\bk,t)\big)$ are in $L^1(\mathbb{R}^n)$, which is a natural condition even imposed in the classical case where there are not fractional operators involved.

\medskip Now we recall a result on the solution of the equation \eqref{eq1Cauchynew}. 

\begin{thm}\label{cor3.2.1CP} 
Let $h(\cdot,t),\rho_i(t)\in C[0,T]$ for $1\leqslant i\leqslant m$. Let  $h(\bx,\cdot)\in L^1(\mathbb{R}^n,\mathrm{d}\bphi)$, $\mathcal{F}_{\bphi}h(\bx,\cdot)\in L^1(\mathbb{R}^n)$, $w_k\in L^1(\mathbb{R}^n,\mathrm{d}\bphi)$ and $\mathcal{F}_{\bphi}w_k\in L^1(\mathbb{R}^n)$ for $0\leqslant k\leqslant n_0-1$. 
\begin{enumerate}
    \item If $n_0>n_1$, $\beta_{m}=0$, then the initial value problem \eqref{eq1Cauchynew} has a unique solution $w(\cdot,t)\in C^{n_0-1,\beta_0}[0,T]$ given by
\begin{multline*}
w(\bx,t)=\sum_{j=0}^{n_0-1}w_j(\bx)\Psi_j(t)+\sum_{j=0}^{n_0-1}\Big(w_j*_{\bphi}\mathcal{F}_{\bphi}^{-1}\big(\mathcal{H}_j(t,|\bk|^{\alpha},\rho_1,\ldots,\rho_{m})\big)\Big)(\bx) \\ -I_{0+}^{\beta_0,\psi}h(\bx,t)+\big(\mathcal{F}^{-1}_{\bphi}\mathcal{G}((\mathcal{F}_{\bphi}h)(\bk,t))\big)(\bx),
\end{multline*}
where
\[
\mathcal{H}_j(t,|\bk|^{\alpha},\rho_{1},\ldots,\rho_{m}) = \left\{
\begin{array}{cl}
\mathcal{K}_j^{\kappa_j}(t,|\bk|^{\alpha},\rho_{1},\ldots,\rho_{m})&\mbox{if}\quad j=0,\ldots,n_{1}-1,\\
\mathcal{K}_j(t,|\bk|^{\alpha},\rho_1,\ldots,\rho_{m})&\mbox{if}\quad  j=n_1,\ldots,n_0-1.
\end{array}\right.
\]
\item If $n_0=n_1$ and $\beta_{m}=0$ then the initial value problem \eqref{eq1Cauchynew} has a unique solution $w(\cdot,t)\in C^{n_0-1,\beta_0}[0,T]$ given by 
\begin{multline*}
w(\bx,t)=\sum_{j=0}^{n_0-1}w_j(\bx)\Psi_j(t)+\sum_{j=0}^{n_0-1}\Big(w_j*_{\bphi}\mathcal{F}_{\bphi}^{-1}\big(\mathcal{K}_j^{\kappa_j}(t,|\bk|^{\alpha},\rho_1,\ldots,\rho_{m})\big)\Big)(\bx) \\
-I_{0+}^{\beta_0,\psi}h(\bx,t)+\big(\mathcal{F}^{-1}_{\bphi}\mathcal{G}((\mathcal{F}_{\bphi}h)(\bk,t))\big)(\bx).
\end{multline*} 
\end{enumerate}
\end{thm}

\subsubsection{\bf The constant-coefficient case}\label{Cauchyproblemsconstant} 

We begin this section by recalling from \cite{luchko} the definition of the multivariate Mittag-Leffler function. This will be used in the explicit  representations of solutions of the considered fractional differential equations in the case of constant coefficients. We also refer to the entire book \cite{mittagbook} for more discussion on different types of Mittag-Leffler functions.

\begin{defn}
The multivariate Mittag-Leffler function $E_{(a_1,\ldots,a_n),b}(z_1,\ldots,z_n)$, of $n$ variables $z_1,\ldots,z_n\in\mathbb{C}$ and arbitrary parameters $a_1,\ldots,a_n,b\in\mathbb{C}$ with positive real parts, is defined by the following formula:
\begin{align*}
E_{(a_1,\ldots,a_n),b}&(z_1,\ldots,z_n)= \\
&\sum_{k=0}^{+\infty}\sum_{l_1+\cdots+l_n= k,\,\, l_1,\ldots,l_n\geqslant0}\frac{k!}{l_1!\times\cdots\times l_n!}\cdot\frac{\prod_{i=1}^n z_i^{l_i}}{\Gamma\left(b+\sum_{i=1}^n a_i l_i\right)},
\end{align*}
where this series is locally uniformly convergent under the given conditions on the parameters.
\end{defn}

Here we consider the following space-time fractional Cauchy problem with constant coefficients and both space and time derivatives taken to be fractional with respect to functions:
\begin{equation}\label{eq1CauchynewC}
\begin{split}
\prescript{C}{}\partial^{\beta_0,\psi}_{t}w(\bx,t)+\sum_{i=1}^{m-1}\rho_i \prescript{C}{}\partial^{\beta_i,\psi}_{t}w(\bx,t)+\rho_{m}\left(-\Delta\right)^{\alpha/2}_{\bphi(\bx)} w(\bx,t)&=h(\bx,t),\\
w(\bx,t)|_{_{_{t=0}}}&=w_0(\bx), \\
\frac{\partial w}{\partial\psi(t)}(\bx,t)|_{_{_{t=0}}}&=w_1(\bx),\\
&\hspace{0.2cm}\vdots \\
\frac{\partial^{n_0-1}w}{\partial\psi(t)^{n_0-1}}(\bx,t)|_{_{_{t=0}}}&=w_{n_0-1}(\bx),
\end{split}
\end{equation}
where $t\in (0,T]$,  $\bx\in\mathbb{R}^n$, $h(\cdot,t)\in C[0,T]$, $0<\alpha<2$, $\rho_i,\beta_i\in\mathbb{C}$ with $\Re (\beta_0)\not\in\mathbb{Z}$, $\rm{Re}(\beta_{\it i})>0$, $n_i=\lfloor\rm{Re}\beta_{\it i}\rfloor+1$ for $0\leqslant i\leqslant m-1$ and $\rm{Re}(\beta_0)>\rm{Re}(\beta_1)>\cdots>\rm{Re}(\beta_{\it m-1})>0$. We also suppose that $h(\bx,\cdot)\in L^1(\mathbb{R}^n,\mathrm{d}\bphi)$, $\mathcal{F}_{\bphi}h(\bx,\cdot)\in L^1(\mathbb{R}^n)$, $w_{k}\in L^1(\mathbb{R}^n,\mathrm{d}\bphi)$ and $\mathcal{F}_{\bphi}w_{k}\in L^1(\mathbb{R}^n)$ for $0\leqslant k\leqslant n_0-1$. Moreover, we also need to assume that
\begin{multline*}
\int_0^t E_{(\beta_0-\beta_1,\ldots,\beta_0-\beta_m),\beta_0}\big(-\rho_1 (\psi(t)-\psi(u))^{\beta_0-\beta_1},\ldots \big. \\
\big.\ldots,-|\bk|^{\alpha}\rho_m (\psi(t)-\psi(u))^{\beta_0-\beta_m}\big)\times \\ \times(\psi(t)-\psi(u))^{\beta_0-1}\big(\mathcal{F}_{\bphi}h(\bk,u)\big)\psi'(u)\,\mathrm{d}u\in L^1(\mathbb{R}^n),\,\,\bk\in\mathbb{R}^n,\,\,t>0.
\end{multline*}

So we have the following results about the solution of equation \eqref{eq1CauchynewC} in the space $C^{n_0-1,\beta_0}[0,T]$. The assertion is a combination of \cite[Theorems 5.6, 5.7 and 5.8.]{fractional-laplacian}. In the latter reference, it was just stated the results without a formal proof since it follows by using the same steps as in the proof of the general cases of time-variable coefficients \cite[Theorems 5.1, 5.2 and 5.3]{fractional-laplacian}. In fact, anyone can realize that the result can be obtained by applying the $n$-dimensional space Fourier transform with respect to $\bphi$ to the fractional Cauchy problem \eqref{eq1CauchynewC}, and then using \cite[Theorems 4.2, 4.3 and 4.4]{RRS} to show the solutions of equation \eqref{eq1CauchynewC} in the different cases.

\begin{thm}\label{cor3.2.1CPC}
Let $h(\cdot,t)\in C[0,T]$ and $\rho_i\in\mathbb{C}$ for $1\leqslant i\leqslant m$. Let $h(\bx,\cdot)\in L^1(\mathbb{R}^n,\mathrm{d}\bphi)$, $\mathcal{F}_{\bphi}h(\bx,\cdot)\in L^1(\mathbb{R}^n)$, $w_k\in L^1(\mathbb{R}^n,\mathrm{d}\bphi)$ and $\mathcal{F}_{\bphi}w_k\in L^1(\mathbb{R}^n)$ for $0\leqslant k\leqslant n_0-1$. 
\begin{enumerate}
    \item If $n_0>n_1$ and  $\beta_{m}=0$ then the initial value problem \eqref{eq1CauchynewC} has a unique solution $w(\cdot,t)\in C^{n_0-1,\beta_0}[0,T]$ given by
\begin{align*}
w(\bx,&t)=\sum_{j=0}^{n_0-1}w_j(\bx)\Psi_j(t) \\
&+\sum_{j=0}^{n_1-1}w_j*_{\bphi}\mathcal{F}_{\bphi}^{-1}\Bigg(\sum_{i=\kappa_j}^m \rho_i^{\star} (\psi(t)-\psi(0))^{j+\beta_0-\beta_i}\times\Bigg. \\
&\times E_{(\beta_0-\beta_1,\ldots,\beta_0-\beta_m),j+1+\beta_0-\beta_i}\big(\rho_1^{\star}(\psi(t)-\psi(0))^{\beta_0-\beta_1},\ldots,\big. \\
&\hspace{5cm}\Bigg.,\ldots,\rho_m^{\star}(\psi(t)-\psi(0))^{\beta_0-\beta_m}\big)\Bigg) \\
&+\sum_{j=n_1}^{n_0-1}w_j*_{\bphi}\mathcal{F}_{\bphi}^{-1}\Bigg(\sum_{i=0}^m \rho_i^{\star} (\psi(t)-\psi(0))^{j+\beta_0-\beta_i}\times\Bigg. \\
&\times E_{(\beta_0-\beta_1,\ldots,\beta_0-\beta_m),j+1+\beta_0-\beta_i}\big(\rho_1^{\star}(\psi(t)-\psi(0))^{\beta_0-\beta_1},\ldots,\big. \\
&\Bigg.\hspace{5cm},\ldots,\rho_m^{\star}(\psi(t)-\psi(0))^{\beta_0-\beta_m})\Bigg) \\
&+\int_0^t \Big(h(\bx,u)\Big)*_{\bphi}\mathcal{F}_{\bphi}^{-1}\Big(\psi'(u)(\psi(t)-\psi(u))^{\beta_0-1}\times\Big. \\
&\Big.\times E_{(\beta_0-\beta_1,\ldots,\beta_0-\beta_m),\beta_0}\big(-\rho_1^{\star}(\psi(t)-\psi(u))^{\beta_0-\beta_1},\ldots, \big.\\
&\Big.\hspace{5cm},\ldots,-\rho_m^{\star}(\psi(t)-\psi(u))^{\beta_0-\beta_m}\big)\Big)\,\mathrm{d}u,
\end{align*}
where $\rho_i^{\star}=\rho_i$ for $0\leqslant i\leqslant m-1$ and $\rho_m^{\star}=|\bk|^{\alpha}\rho_m$.
\item If $n_0=n_1$ and $\beta_{m}=0$ then the initial value problem \eqref{eq1CauchynewC} has a unique solution $w(\cdot,t)\in C^{n_0-1,\beta_0}[0,T]$ given by 
\begin{align*}
w(\bx,&t)=\sum_{j=0}^{n_0-1}w_j(\bx)\Psi_j(t) \\
&+\sum_{j=0}^{n_0-1}w_j*_{\bphi}\mathcal{F}_{\bphi}^{-1}\Bigg(\sum_{i=\kappa_j}^m \rho_i^{\star} (\psi(t)-\psi(0))^{j+\beta_0-\beta_i}\times\Bigg. \\
&\times E_{(\beta_0-\beta_1,\ldots,\beta_0-\beta_m),j+1+\beta_0-\beta_i}\big(\rho_1^{\star}(\psi(t)-\psi(0))^{\beta_0-\beta_1},\ldots, \\
&\Bigg.\hspace{5cm},\ldots,\rho_m^{\star}(\psi(t)-\psi(0))^{\beta_0-\beta_m}\big)\Bigg) \\
&+\int_0^t \Big(h(\bx,u)\Big)*_{\bphi}\mathcal{F}_{\bphi}^{-1}\Big(\psi'(u)(\psi(t)-\psi(u))^{\beta_0-1}\times\Big. \\
&\times E_{(\beta_0-\beta_1,\ldots,\beta_0-\beta_m),\beta_0}\big(-\rho_1^{\star}(\psi(t)-\psi(u))^{\beta_0-\beta_1},\ldots, \\
&\Big.\hspace{5cm},\ldots,-\rho_m^{\star}(\psi(t)-\psi(u))^{\beta_0-\beta_m}\big)\Big)\,\mathrm{d}u,
\end{align*}
where $\rho_i^{\star}=\rho_i$ for $0\leqslant i\leqslant m-1$ and $\rho_m^{\star}=|\bk|^{\alpha}\rho_m$.
\end{enumerate}
\end{thm}

\subsection{Clifford Analysis} Here we recall some necessary facts and terminology on the Clifford analysis. Nevertheless, for more details on this topic, see e.g. \cite{bookclifford1}. Let us start by recalling the universal real Clifford algebra. We then take the $n$-dimensional vector space $\mathbb{R}^n$ endowed with an orthonormal basis $\{e_1,\ldots,e_n\}$. The universal real Clifford algebra $Cl_{0,n}$ is defined as the $2^n$-dimensional associative algebra which satisfies the following multiplication rule 
\[e_{i}e_{j}+e_{j}e_{i}=-2\delta_{ij},\quad i,j=1,\ldots,n.\]
A vector space basis for $Cl_{0,n}$ is generated by the elements $e_0=1$ and $e_{B}=e_{r_1,\ldots,r_k}$, where $B=\{r_1,\ldots,r_k\}\subset N=\{1,\ldots,n\}$ for $1\leqslant r_1<\cdots<r_k\leqslant n$. Hence, for any $y\in Cl_{0,n}$ we have that  $y=\sum_{B}x_{B}e_{B}$ with $x_B\in\mathbb{R}$. Now we recall the complexified Clifford algebra $\mathbb{C}_n$:
\[ \mathbb{C}_n =\mathbb{C}\otimes Cl_{0,n}=\left\{v=\sum_{B}v_B e_{B},\,\,v_B\in\mathbb{C},\,\,B\subset{N}\right\}, \] 
where the imaginary unit $i$ of $\mathbb{C}$ commutes with the basis elements $(ie_j =e_j i$ for any $j=1,\ldots,n)$. A $\mathbb{C}_n$-valued function defined on an open subset $V\subset\mathbb{R}^n$ can be represented by $f=\sum_{B}f_B e_B$ with $\mathbb{C}$-valued components $f_B$. As usual, the continuity, differentiability and other properties are normally assumed component-wisely by means of the classical notions on $\mathbb{C}$. 

For the next definition we need to recall the Euclidean Dirac operator 
\[
D_{\bx}=\sum_{k=1}^n e_k \partial_{\bx_k},\quad\text{and it has the property}\quad D_{\bx}^2=-\Delta=-\sum_{k=1}^n \partial_{\bx_k}^2.       
\]
\begin{defn}\cite[Chapter 2]{bookclifford1}
A Clifford valued $C^1$ function $f$ is left-monogenic if $D_{\bx}f=0$ on $V$, respectively right-monogenic if $fD_{\bx}=0$ on $V$.
\end{defn}

The above definition will be used implicitly in Section \ref{special} to illustrate some particular cases of the main results of the present paper.

In the next section we will introduce a new class of generalized fractional Dirac type operators. Hence, we need to use and describe a Witt basis. Let us embed $\mathbb{R}^n$ into $\mathbb{R}^{n+2}$ by considering two new elements $e_+$ and $e_{-}$ which satisfy $e_+^2=1$, $e_{-}^2=-1$ and $e_{+}e_{-}+e_{-}e_+=0$. We also suppose that $e_{-},e_+$ anti-commute with each element from $\{e_1,\ldots,e_n\}$. Then $\{e_1,\ldots,e_n,e_+,e_{-}\}$ spans $\mathbb{R}^{n+1,1}$. By using the elements $e_{+},e_-$ we compose two nilpotent elements usually denoted by $\mathfrak{f}$ and $\mathfrak{f}^+$. They are defined by:
\[\mathfrak{f}=\frac{e_+-e_-}{2}\quad\text{and}\quad \mathfrak{f}^+=\frac{e_++e_-}{2}.\]   
Some useful properties:
\begin{enumerate}
\item\label{uno1} $(\mathfrak{f})^2= (\mathfrak{f}^+)^2=0,$
\item\label{dos2} $\mathfrak{f}\mathfrak{f}^++\mathfrak{f}^+\mathfrak{f}=1,$
\item\label{tres3} $\mathfrak{f}e_i+e_i\mathfrak{f}=\mathfrak{f}^+ e_i+e_i\mathfrak{f}^+=0,\quad i=1,\ldots,n.$
\end{enumerate}

\section{Main results}\label{mainresults}

In this section, we study some general fractional Cauchy type problems by using generalized fractional Dirac type operators, which are combinations of the (fractional) Laplacian and derivatives with respect to another function. We show in all cases the explicit solutions.

\subsection{Generalized fractional Dirac type operators with time variable coefficients}

By using the Witt basis $\{e_1,\ldots,e_n,\mathfrak{f},\mathfrak{f}^+\}$ we formally introduce a new class of generalized fractional Dirac type operators with time variable coefficients and with respect to the given functions $\bphi,\psi$ by
\begin{equation}\label{classdiracnew}
_{\bphi(\bx),t}D_{\rho_{1},\ldots,\rho_{m};\,\psi}^{\lambda,\beta_0,\ldots,\beta_{m-1}}:=\rho_{m}^{1/2}(t)(-\Delta)^{\lambda/2}_{\bphi(\bx)}+\mathfrak{f}\left(^{C}\partial_{t}^{\beta_0,\psi}+\sum_{i=1}^{m-1}\rho_i(t) ^{C}\partial_{t}^{\beta_i,\psi}\right)+\mathfrak{f}^+,
\end{equation}
where $\bx\in\mathbb{R}^n$, $t>0$, $0<\lambda<1$, $\beta_i\in\mathbb{C}$, $\rm{Re}(\beta_0)>\rm{Re}(\beta_1)>\cdots>\rm{Re}(\beta_{{\it m}-1})>0$, $\Re(\beta_0)\not\in\mathbb{Z}$ and $n_i=\lfloor \rm{Re}\beta_{\it i}\rfloor+1$ for $i=0,1,\ldots,m-1$. We also assume that $\rho_i(t)\in C[0,T]$ $(i=1,\ldots,m)$ and $\bphi:\mathbb{R}^n\to\mathbb{R}^n$ is a bijection all of whose $1$st-order partial derivatives exist almost everywhere. 

In a very particular case, notice that the generalized fractional Dirac type operator of \eqref{classdiracnew} becomes the one introduced in \cite[Formula (3.1)]{BRS} when $\psi(t)=t$ and $\bphi(\bx)=\bx$.

\begin{prop}
Let $\bx\in\mathbb{R}^n$, $t>0$, $0<\lambda<1$, $\beta_i\in\mathbb{C}$, $\rm{Re}(\beta_0)>\rm{Re}(\beta_1)>\cdots>\rm{Re}(\beta_{{\it m}-1})>0$, $\Re(\beta_0)\not\in\mathbb{Z}$ and $n_i=\lfloor \rm{Re}\beta_{\it i}\rfloor+1$, $i=0,1,\ldots,m-1$. We also suppose that $\rho_i(t)\in C[0,T]$ $(i=1,\ldots,m)$ and $\bphi:\mathbb{R}^n\to\mathbb{R}^n$ is a bijection all of whose $1$st-order partial derivatives exist almost everywhere. The following factorization holds:
\begin{equation}\label{factornew} 
\big(_{\bphi(\bx),t}D_{\rho_{1},\ldots,\rho_{m};\,\psi}^{\lambda,\beta_0,\ldots,\beta_{m-1}}\big)^2=\rho_{m}(t)(-\Delta)^{\lambda}_{\bphi(\bx)}+\,^{C}\partial_{t}^{\beta_0,\psi}+\sum_{i=1}^{m-1}\rho_i(t) ^{C}\partial_{t}^{\beta_i,\psi}.
\end{equation}
\end{prop}
\begin{proof}
We know that
\begin{align*}
&(\,_{\bphi(\bx),t}D_{\rho_{1},\ldots,\rho_{m};\,\psi}^{\lambda,\beta_0,\ldots,\beta_{m-1}})^2= \\
&\left(\rho_{m}^{1/2}(t)(-\Delta)^{\lambda/2}_{\bphi(\bx)}+\mathfrak{f}\left(\prescript{C}{}\partial_{t}^{\beta_0,\psi}+\sum_{i=1}^{m-1}\rho_i(t) ^{C}\partial_{t}^{\beta_i,\psi}\right)+\mathfrak{f}^+\right)^2.
\end{align*}
Notice that for  
\[E=\rho_{m}^{1/2}(t)(-\Delta)^{\lambda/2}_{\bphi(\bx)},\quad F=\,^{C}\partial_{t}^{\beta_0,\psi}+\sum_{i=1}^{m-1}\rho_i(t) ^{C}\partial_{t}^{\beta_i,\psi},\]
it follows that
\[ (\,_{\bphi(\bx),t}D_{\rho_{1},\ldots,\rho_{m};\,\phi}^{\lambda,\beta_0,\ldots,\beta_{m-1}})^2=\big(E+\mathfrak{f}F+\mathfrak{f}^+\big)\big(E+\mathfrak{f}F+\mathfrak{f}^+\big). \]
By Theorem \ref{Thm:MFLn:conjug} we have 
\begin{align*}
\left(-\Delta\right)^{\lambda/2}_{\bphi(\bx)}\circ \left(-\Delta\right)^{\lambda/2}_{\bphi(\bx)}&=\big(Q_{\bphi}\circ\left(-\Delta\right)^{\lambda/2}\circ Q_{\bphi}^{-1}\big)\circ \big(Q_{\bphi}\circ\left(-\Delta\right)^{\lambda/2}\circ Q_{\bphi}^{-1}\big) \\
&=Q_{\bphi}\circ\left(-\Delta\right)^{\lambda/2}\circ \left(-\Delta\right)^{\lambda/2}\circ Q_{\bphi}^{-1} \\
&=Q_{\bphi}\circ\left(-\Delta\right)^{\lambda}\circ Q_{\bphi}^{-1}=\left(-\Delta\right)^{\lambda}_{\bphi(\bx)}.
\end{align*}
By the properties \eqref{uno1}, \eqref{dos2}, \eqref{tres3} we obtain:
\begin{align*}
\big(E+\mathfrak{f}F+\mathfrak{f}^+\big)&\big(E+\mathfrak{f}F+\mathfrak{f}^+\big) \\
&=EE-\mathfrak{f}EF-\mathfrak{f}^+ \mathfrak{f}EF+(\mathfrak{f})^2 FF+\mathfrak{f}\mathfrak{f}^+ F+\mathfrak{f}^+ E+\mathfrak{f}^+ \mathfrak{f}F+(\mathfrak{f})^2 \\
&=EE+\mathfrak{f}\mathfrak{f}^+ F+\mathfrak{f}^+ \mathfrak{f}F=EE+(\mathfrak{f}\mathfrak{f}^+ +\mathfrak{f}^+ \mathfrak{f})F=EE+F,
\end{align*}
which complete the proof.
\end{proof}

\subsection{Explicit solution of fractional Cauchy type problems with time variable  coefficients}

Now we give the main results of the paper. Below we just treat the case $n_0=n_1$ and $\beta_{m}=0$. Other cases are similar. 

\begin{thm}\label{cliffordnew}
Let $n_0=n_1$, $\beta_{m}=0$, $0<\lambda<1$, $\beta_i\in\mathbb{C}$, $\rm{Re}(\beta_0)>\rm{Re}(\beta_1)>\cdots>\rm{Re}(\beta_{{\it m}-1})>0$, $\Re(\beta_0)\not\in\mathbb{Z}$ and $n_i=\lfloor \rm{Re}\beta_{\it i}\rfloor+1$ for $i=0,1,\ldots,m-1$. We also suppose that $\rho_i(t)\in C[0,T]$ $(i=1,\ldots,m)$ and $\bphi:\mathbb{R}^n\to\mathbb{R}^n$ is a bijection all of whose $1$st-order partial derivatives exist almost everywhere. The following fractional Cauchy type problem 
\begin{equation}\label{eq1Cauchynewclinew}
\begin{split}
\left(\rho_{m}^{1/2}(t)(-\Delta)^{\lambda/2}_{\bphi(\bx)}+\mathfrak{f}\left(^{C}\partial_{t}^{\beta_0,\psi}+\sum_{i=1}^{m-1}\rho_i(t) ^{C}\partial_{t}^{\beta_i,\psi}\right)+\mathfrak{f}^+\right)w(\bx,t)&=0,\\
w(\bx,t)|_{_{_{t=0}}}&=r_0(\bx),  \\
\frac{\partial w}{\partial\psi(t)}(\bx,t)|_{_{_{t=0}}}&=r_1(\bx),\\
&\hspace{0.2cm}\vdots \\
\frac{\partial^{n_0-1}w}{\partial\psi(t)^{n_0-1}}(\bx,t)|_{_{_{t=0}}}&=r_{n_0-1}(\bx),
\end{split}
\end{equation}
is solvable for any $\bx\in\mathbb{R}^n$ and $t\in (0,T]$, and the solution is given by
\begin{align*}
&w(\bx,t)=\sum_{j=0}^{n_0-1}\rho_{m}^{1/2}(t)\Psi_j(t)(-\Delta)^{\lambda/2}_{\bphi(\bx)} \big(r_j(\bx)\big) \\
&+\rho_{m}^{1/2}(t)\sum_{j=0}^{n_0-1}\int_{\mathbb{R}^n}(-\Delta)^{\lambda/2}_{\bphi(\bx)}r_j\big(\bphi^{-1}\big(\bphi(\bx)-\bphi(\by)\big)\big)\times \nonumber\\ 
&\hspace{3cm}\times \mathcal{F}_{\bphi}^{-1}\big(\mathcal{K}_j^{\kappa_j}(t,|\by|^{\lambda},\rho_1,\ldots,\rho_{m})\big)\big|J\bphi(\by)\big|\,\mathrm{d}\by\nonumber \\
&+\mathfrak{f}\left(\sum_{i=1}^{m-1}\sum_{j=0}^{n_0-1}\rho_i(t)\,^{C}\partial_{t}^{\beta_i,\psi}\Psi_j(t) r_j(\bx)\right.\nonumber\\
&+\sum_{j=0}^{n_0-1}\Big(r_j*_{\bphi}\mathcal{F}_{\bphi}^{-1}\big(\prescript{C}{}\partial_t^{\beta_0,\psi}\mathcal{K}_j^{\kappa_j}(t,|\bk|^{\lambda},\rho_1,\ldots,\rho_{m})\big)\Big)(\bx) \nonumber\\
&\left.+\sum_{i=1}^{m-1}\sum_{j=0}^{n_0-1}\rho_i(t)\Big(r_j*_{\bphi}\mathcal{F}_{\bphi}^{-1}\big(\prescript{C}{}\partial_t^{\beta_i,\psi}\mathcal{K}_j^{\kappa_j}(t,|\bk|^{\lambda},\rho_1,\ldots,\rho_{m})\big)\Big)(\bx)\right)\nonumber \\
&+\mathfrak{f}^{+}\left(\sum_{j=0}^{n_0-1}r_j(\bx)\Psi_j(t)+\sum_{j=0}^{n_0-1}\Big(r_j*_{\bphi}\mathcal{F}_{\bphi}^{-1}\big(\mathcal{K}_j^{\kappa_j}(t,|\bk|^{\lambda},\rho_1,\ldots,\rho_{m})\big)\Big)(\bx)\right),\nonumber 
\end{align*}
where $\Psi_j(t)$ is that of \eqref{PSI}, $\mathcal{K}_j^{\kappa_j}$ is the same of \eqref{kjkappa} and $\varkappa_j=\displaystyle{\min\{\mathbb{K}_j\}}$.
\end{thm}
\begin{proof}
Notice first that equation \eqref{eq1Cauchynewclinew} is equivalent to 
\begin{equation}\label{caenew}
_{\bphi(\bx),t}D_{\rho_{1},\ldots,\rho_{m};\,\psi}^{\lambda,\beta_0,\ldots,\beta_{m-1}}w(\bx,t)=0,\quad \bx\in\mathbb{R}^n,\,\,t>0.
\end{equation}
Applying the operator $_{\bphi(\bx),t}D_{\rho_{1},\ldots,\rho_{m};\,\psi}^{\lambda,\beta_0,\ldots,\beta_{m-1}}$ to \eqref{caenew} implies that
\[\rho_{m}(t)(-\Delta)^{\lambda}_{\bphi(\bx)}w(\bx,t)+\prescript{C}{}\partial_{t}^{\beta_0,\psi}w(\bx,t)+\sum_{i=1}^{m-1}\rho_i(t)\prescript{C}{}\partial_{t}^{\beta_i,\psi}w(\bx,t)=0,\]
due to the factorization \eqref{factornew}. We then obtain the equation \eqref{eq1Cauchynew} with $h\equiv0$. Thus, by Theorem \ref{cor3.2.1CP}, the solution of equation \eqref{eq1Cauchynewclinew} is given by 
\[
_{\bphi(\bx),t}D_{\rho_{1},\ldots,\rho_{m};\,\psi}^{\lambda,\beta_0,\ldots,\beta_{m-1}}w(\bx,t),
\]
where 
\[
w(\bx,t)=\sum_{j=0}^{n_0-1}r_j(\bx)\Psi_j(t)+\sum_{j=0}^{n_0-1}\Big(r_j*_{\bphi}\mathcal{F}_{\bphi}^{-1}\big(\mathcal{K}_j^{\kappa_j}(t,|\bk|^{\lambda},\rho_1,\ldots,\rho_{m})\big)\Big)(\bx).
\]
The explicit representation of the solution follows by calculating each of the components of $_{\bphi(\bx),t}D_{\rho_{1},\ldots,\rho_{m};\,\psi}^{\lambda,\beta_0,\ldots,\beta_{m-1}}w(\bx,t)$, separately. In fact, we have
\begin{align*}
&_{\bphi(\bx),t}D_{\rho_{1},\ldots,\rho_{m};\,\psi}^{\lambda,\beta_0,\ldots,\beta_{m-1}}w(\bx,t)= \\
&\rho_{m}^{1/2}(t)(-\Delta)^{\lambda/2}_{\bphi(\bx)} w(\bx,t)+\mathfrak{f}\left(^{C}\partial_{t}^{\beta_0,\psi}+\sum_{i=1}^{m-1}\rho_i(t) ^{C}\partial_{t}^{\beta_i,\psi}\right)w(\bx,t)+\mathfrak{f}^{+}w(\bx,t).
\end{align*}
We first get
\begin{align*}
\rho_{m}^{1/2}(t)&(-\Delta)^{\lambda/2}_{\bphi(\bx)}w(\bx,t)=\sum_{j=0}^{n_0-1}\rho_{m}^{1/2}(t)(-\Delta)^{\lambda/2}_{\bphi(\bx)} \big(r_j(\bx)\big)\Psi_j(t) \\
&+\rho_{m}^{1/2}(t)\sum_{j=0}^{n_0-1}\int_{\mathbb{R}^n}(-\Delta)^{\lambda/2}_{\bphi(\bx)}r_j\big(\bphi^{-1}\big(\bphi(\bx)-\bphi(\by)\big)\big)\times \\ &\hspace{3cm}\times \mathcal{F}_{\bphi}^{-1}\big(\mathcal{K}_j^{\kappa_j}(t,|\by|^{\lambda},\rho_1,\ldots,\rho_{m})\big)\big|J\bphi(\by)\big|\,\mathrm{d}\by.
\end{align*}
We also have
\begin{align*}
&\left(^{C}\partial_{t}^{\beta_0,\psi}+\sum_{i=1}^{m-1}\rho_i(t) ^{C}\partial_{t}^{\beta_i,\psi}\right)w(\bx,t)=\sum_{i=1}^{m-1}\sum_{j=0}^{n_0-1}\rho_i(t)r_j(\bx)\,^{C}\partial_{t}^{\beta_i,\psi}\Psi_j(t)\\
&+\sum_{j=0}^{n_0-1}\Big(r_j*_{\bphi}\mathcal{F}_{\bphi}^{-1}\big(\prescript{C}{}\partial_t^{\beta_0,\psi}\mathcal{K}_j^{\kappa_j}(t,|\bk|^{\lambda},\rho_1,\ldots,\rho_{m})\big)\Big)(\bx) \\
&+\sum_{i=1}^{m-1}\sum_{j=0}^{n_0-1}\rho_i(t)\Big(r_j*_{\bphi}\mathcal{F}_{\bphi}^{-1}\big(\prescript{C}{}\partial_t^{\beta_i,\psi}\mathcal{K}_j^{\kappa_j}(t,|\bk|^{\lambda},\rho_1,\ldots,\rho_{m})\big)\Big)(\bx),
\end{align*}
since $\,^{C}\partial_{t}^{\beta_0,\phi}\Psi_j(t)=0$ for any $j=0,1,\ldots,n_0-1$.
\end{proof}

\begin{rem}
It is important to note that the fractional parameter in the Laplacian operator of equation \eqref{eq1Cauchynewclinew} is just defined in the range $\lambda/2\in(0,1/2)$. This anomaly is given by the consideration of the equation \eqref{eq1Cauchynewclinew} in the new basis of Witt type.  
\end{rem}

The following statement follows by Theorem \ref{cliffordnew} with $\bphi(\bx)=\bx$.  

\begin{cor}\label{cliffordnew2}
Let $n_0=n_1$, $\beta_{m}=0$, $0<\lambda<1$, $\beta_i\in\mathbb{C}$, $\rm{Re}(\beta_0)>\rm{Re}(\beta_1)>\cdots>\rm{Re}(\beta_{{\it m}-1})>0$, $\Re(\beta_0)\not\in\mathbb{Z}$ and $n_i=\lfloor \rm{Re}\beta_{\it i}\rfloor+1$ for $i=0,1,\ldots,m-1$. Suppose also that $\rho_i(t)\in C[0,T]$ $(i=1,\ldots,m)$. The following fractional Cauchy type problem 
\begin{equation*}
\begin{split}
\left(\rho_{m}^{1/2}(t)(-\Delta)^{\lambda/2}_{\bx}+\mathfrak{f}\left(^{C}\partial_{t}^{\beta_0,\psi}+\sum_{i=1}^{m-1}\rho_i(t) ^{C}\partial_{t}^{\beta_i,\psi}\right)+\mathfrak{f}^+\right)w(\bx,t)&=0,\\
w(\bx,t)|_{_{_{t=0}}}&=r_0(\bx),  \\
\frac{\partial w}{\partial\psi(t)}(\bx,t)|_{_{_{t=0}}}&=r_1(\bx),\\
&\hspace{0.2cm}\vdots \\
\frac{\partial^{n_0-1}w}{\partial\psi(t)^{n_0-1}}(\bx,t)|_{_{_{t=0}}}&=r_{n_0-1}(\bx),
\end{split}
\end{equation*}
is solvable for any $\bx\in\mathbb{R}^n$ and $t\in (0,T]$, and the solution is given by
\begin{align*}
w&(\bx,t)=\sum_{j=0}^{n_0-1}\rho_{m}^{1/2}(t)\Psi_j(t)(-\Delta)^{\lambda/2}_{\bx} \big(r_j(\bx)\big) \\
&+\rho_{m}^{1/2}(t)\sum_{j=0}^{n_0-1}\int_{\mathbb{R}^n}(-\Delta)^{\lambda/2}_{\bx}r_j\big(\bx-\by\big)\big)\mathcal{F}^{-1}\big(\mathcal{K}_j^{\kappa_j}(t,|\by|^{\lambda},\rho_1,\ldots,\rho_{m})\big)\,\mathrm{d}\by\nonumber \\
&+\mathfrak{f}\left(\sum_{i=1}^{m-1}\sum_{j=0}^{n_0-1}\rho_i(t)\,^{C}\partial_{t}^{\beta_i,\psi}\Psi_j(t) r_j(\bx)\right.\nonumber\\
&+\sum_{j=0}^{n_0-1}\Big(r_j*\mathcal{F}^{-1}\big(\prescript{C}{}\partial_t^{\beta_0,\psi}\mathcal{K}_j^{\kappa_j}(t,|\bk|^{\lambda},\rho_1,\ldots,\rho_{m})\big)\Big)(\bx) \nonumber\\
&\left.+\sum_{i=1}^{m-1}\sum_{j=0}^{n_0-1}\rho_i(t)\Big(r_j*\mathcal{F}^{-1}\big(\prescript{C}{}\partial_t^{\beta_i,\psi}\mathcal{K}_j^{\kappa_j}(t,|\bk|^{\lambda},\rho_1,\ldots,\rho_{m})\big)\Big)(\bx)\right)\nonumber \\
&+\mathfrak{f}^{+}\left(\sum_{j=0}^{n_0-1}r_j(\bx)\Psi_j(t)+\sum_{j=0}^{n_0-1}\Big(r_j*\mathcal{F}^{-1}\big(\mathcal{K}_j^{\kappa_j}(t,|\bk|^{\lambda},\rho_1,\ldots,\rho_{m})\big)\Big)(\bx)\right),\nonumber 
\end{align*}
where $\Psi_j(t)$ is that of \eqref{PSI}, $\mathcal{K}_j^{\kappa_j}$ is the same of \eqref{kjkappa} and $\varkappa_j=\displaystyle{\min\{\mathbb{K}_j\}}$.
\end{cor}

\subsection{ Explicit solution of fractional Cauchy type problems with constant coefficients}

We show that the solution can be given in a more explicit form by using the multivariate Mittag-Leffler function. Again, we just consider the case $n_0=n_1$ and $\beta_{m}=0$. Other cases can prove similarly. Instead of transforming the solution given in Theorem \ref{cliffordnew}, we give a complete proof by using the result of Theorem \ref{cor3.2.1CPC}.   

\begin{thm}\label{constant-thm}
Let $n_0=n_1$, $\beta_{m}=0$, $0<\lambda<1$, $\beta_i,\rho_i\in\mathbb{C}$, $\rm{Re}(\beta_0)>\rm{Re}(\beta_1)>\cdots>\rm{Re}(\beta_{{\it m}-1})>0$, $\Re(\beta_0)\not\in\mathbb{Z}$ and $n_i=\lfloor \rm{Re}\beta_{\it i}\rfloor+1$ for $i=0,1,\ldots,m$. Let $\bphi:\mathbb{R}^n\to\mathbb{R}^n$ be a bijection all of whose $1$st-order partial derivatives exist almost everywhere. The fractional Cauchy type problem \eqref{eq1Cauchynewclinew} with constant coefficients $(\rho_i(t)=\rho_i\in\mathbb{C},\,i=1,\ldots,m)$ is solvable for any $\bx\in\mathbb{R}^n$ and $t\in (0,T]$, and the solution is given by
\begin{align*}
&w(\bx,t)=\sum_{j=0}^{n_0-1}\rho_{m}^{1/2}\Psi_j(t)(-\Delta)^{\lambda/2}_{\bphi(\bx)} \big(r_j(\bx)\big) \\
&+\rho_{m}^{1/2}\sum_{j=0}^{n_0-1}\int_{\mathbb{R}^n}(-\Delta)^{\lambda/2}_{\bphi(\bx)}r_j\big(\bphi^{-1}\big(\bphi(\bx)-\bphi(\by)\big)\big)\times \\
&\times\mathcal{F}_{\bphi}^{-1}\Bigg(\sum_{i=\kappa_j}^m \rho_i^{\star} (\psi(t)-\psi(0))^{j+\beta_0-\beta_i}\times\Bigg. \\
&\hspace{1cm}\times E_{(\beta_0-\beta_1,\ldots,\beta_0-\beta_m),j+1+\beta_0-\beta_i}\big(\rho_1(\psi(t)-\psi(0))^{\beta_0-\beta_1},\ldots,\big. \\
&\hspace{0.5cm}\big.\Bigg.\ldots,\rho_{m-1}(\psi(t)-\psi(0))^{\beta_0-\beta_{m-1}},\rho_m |\by|^{\lambda}(\psi(t)-\psi(0))^{\beta_0-\beta_m}\big)\Bigg)\big|J\bphi(\by)\big|\,\mathrm{d}\by \\
&+\mathfrak{f}\left(\sum_{i=1}^{m-1}\sum_{j=0}^{n_0-1}\rho_i\,^{C}\partial_{t}^{\beta_i,\psi}\Psi_j(t) r_j(\bx)\right.\\
&\hspace{1cm}+\sum_{j=0}^{n_0-1}w_j*_{\bphi}\mathcal{F}_{\bphi}^{-1}\left(\prescript{C}{}\partial^{\beta_0,\psi}_{t}\sum_{i=\kappa_j}^m \rho_i^{\star} (\psi(t)-\psi(0))^{j+\beta_0-\beta_i}\times\right. \\
&\hspace{2cm}\times E_{(\beta_0-\beta_1,\ldots,\beta_0-\beta_m),j+1+\beta_0-\beta_i}\big(\rho_1^{\star}(\psi(t)-\psi(0))^{\beta_0-\beta_1},\ldots,\big. \\
&\big.\Bigg.\hspace{7cm},\ldots,\rho_m^{\star}(\psi(t)-\psi(0))^{\beta_0-\beta_m}\big)\Bigg) \\
&+\sum_{i=1}^{m-1}\sum_{j=0}^{n_0-1}\rho_i w_j*_{\bphi} \mathcal{F}_{\bphi}^{-1}\left(\prescript{C}{}\partial^{\beta_i,\psi}_{t}\sum_{i=\kappa_j}^m \rho_i^{\star} (\psi(t)-\psi(0))^{j+\beta_0-\beta_i}\times\right.\\
&\hspace{1cm}\times E_{(\beta_0-\beta_1,\ldots,\beta_0-\beta_m),j+1+\beta_0-\beta_i}\big(\rho_1^{\star}(\psi(t)-\psi(0))^{\beta_0-\beta_1},\ldots,\big. \\
&
.\left.\big.\hspace{7cm},\ldots,\rho_m^{\star}(\psi(t)-\psi(0))^{\beta_0-\beta_m}\big)\right) \\
&\left.\left.+\sum_{i=1}^{m-1}\sum_{j=0}^{n_0-1}\rho_i\Big(r_j*_{\bphi}\mathcal{F}_{\bphi}^{-1}\big(\prescript{C}{}\partial_t^{\beta_i,\psi}\mathcal{K}_j^{\kappa_j}(t,|\bk|^{\lambda},\rho_1,\ldots,\rho_{m})\big)\Big)(x)\right)\right) \\
&+\mathfrak{f}^{+}\left(\sum_{j=0}^{n_0-1}w_j(\bx)\Psi_j(t)+\sum_{j=0}^{n_0-1}w_j*_{\bphi}\mathcal{F}_{\bphi}^{-1}\Bigg(\sum_{i=\kappa_j}^m \rho_i^{\star} (\psi(t)-\psi(0))^{j+\beta_0-\beta_i}\times\right.\Bigg. \\
&\hspace{1cm}\times E_{(\beta_0-\beta_1,\ldots,\beta_0-\beta_m),j+1+\beta_0-\beta_i}\big(\rho_1^{\star}(\psi(t)-\psi(0))^{\beta_0-\beta_1},\ldots,\big. \\
&\hspace{6.5cm}\Bigg.\left.\big.,\ldots,\rho_m^{\star}(\psi(t)-\psi(0))^{\beta_0-\beta_m}\big)\Bigg)\right),
\end{align*}
where $\Psi_j(t)$ is that of \eqref{PSI}, $\rho_i^{\star}=\rho_i$ for $0\leqslant i\leqslant m-1$ and $\rho_m^{\star}=|\bk|^{\lambda}\rho_m$.
\end{thm}
\begin{proof}
Equation \eqref{eq1Cauchynewclinew} can be written as  
\begin{equation}\label{caenew-const}
_{\bphi(\bx),t}D_{\rho_{1},\ldots,\rho_{m};\,\psi}^{\lambda,\beta_0,\ldots,\beta_{m-1}}w(\bx,t)=0,\quad \bx\in\mathbb{R}^n,\,\,t>0,
\end{equation}
where $\rho_i\in\mathbb{C}$ for $i=1,\ldots,m.$ By \eqref{factornew} and the application of $_{\bphi(\bx),t}D_{\rho_{1},\ldots,\rho_{m};\,\psi}^{\lambda,\beta_0,\ldots,\beta_{m-1}}$ to \eqref{caenew-const} we obtain
\[\rho_{m}(-\Delta)^{\lambda}_{\bphi(\bx)}w(\bx,t)+\prescript{C}{}\partial_{t}^{\beta_0,\psi}w(\bx,t)+\sum_{i=1}^{m-1}\rho_i\prescript{C}{}\partial_{t}^{\beta_i,\psi}w(\bx,t)=0.\]
The latter equation is the same of \eqref{eq1CauchynewC} with $h\equiv0$. Hence, by Theorem \ref{cor3.2.1CPC}, the solution of equation \eqref{eq1Cauchynewclinew} ($\rho_i\in\mathbb{C}$ for $i=1,\ldots,m$) is given by 
\[
_{\bphi(\bx),t}D_{\rho_{1},\ldots,\rho_{m};\,\psi}^{\lambda,\beta_0,\ldots,\beta_{m-1}}w(\bx,t),
\]
where 
\begin{align*}
w(\bx,t)&=\sum_{j=0}^{n_0-1}w_j(\bx)\Psi_j(t)+\sum_{j=0}^{n_0-1}w_j*_{\bphi}\mathcal{F}_{\bphi}^{-1}\Bigg(\sum_{i=\kappa_j}^m \rho_i^{\star} (\psi(t)-\psi(0))^{j+\beta_0-\beta_i}\times\Bigg. \\
&\hspace{1cm}\times E_{(\beta_0-\beta_1,\ldots,\beta_0-\beta_m),j+1+\beta_0-\beta_i}\big(\rho_1^{\star}(\psi(t)-\psi(0))^{\beta_0-\beta_1},\ldots,\big. \\
&\hspace{5.5cm}\big.\Bigg.,\ldots,\rho_m^{\star}(\psi(t)-\psi(0))^{\beta_0-\beta_m}\big)\Bigg) 
\end{align*}
where $\rho_i^{\star}=\rho_i$ for $0\leqslant i\leqslant m-1$ and $\rho_m^{\star}=|\bk|^{\lambda}\rho_m$. Hence
\begin{align*}
_{\bphi(\bx),t}&D_{\rho_{1},\ldots,\rho_{m};\,\psi}^{\lambda,\beta_0,\ldots,\beta_{m-1}}w(\bx,t)= \\
&\rho_{m}^{1/2}(-\Delta)^{\lambda/2}_{\bphi(\bx)} w(\bx,t)+\mathfrak{f}\left(^{C}\partial_{t}^{\beta_0,\psi}+\sum_{i=1}^{m-1}\rho_i\,^{C}\partial_{t}^{\beta_i,\psi}\right)w(\bx,t)+\mathfrak{f}^{+}w(\bx,t).
\end{align*}
It remains to calculate each component of the expression above, which is a matter of doing some estimates.
\end{proof}

Relative simple expressions for the solution of equation \eqref{eq1Cauchynewclinew} with constant coefficients $(\rho_i(t)=\rho_i\in\mathbb{C},\,i=1,\ldots,m)$ can be straightforward obtained by setting $\bphi(\bx)=\bx$ or, $\psi(t)=t$ or $\bphi(\bx)=\bx$ and $\psi(t)=t$. 

\subsection{ Examples}\label{examples}
 
In the next two examples we suppose that $\psi(t)=t$ for any $t\in[0,T]$ and we denote $\prescript{C}{}\partial_{t}^{\beta_0,\psi}$, $I_{0+}^{\beta_0,\psi}$ simply by $\prescript{C}{}\partial_{t}^{\beta_0}$, $I_{0+}^{\beta_0}.$ 
 
\subsubsection{\bf Wave and heat type equations}

Let us consider the fractional initial value problem of wave type with a power time-function:

\begin{equation}\label{phiexamplewave}
\left\{ \begin{split}
\left(t^{\beta_0/2}(-\Delta)^{\alpha/4}_{\bphi(\bx)}+\prescript{C}{}\partial_{t}^{\beta_0}\mathfrak{f}+\mathfrak{f}^+\right)v(\bx,t)&=0,\\
v(\bx,t)|_{_{_{t=0}}}&=v_0(\bx),  \\
\partial_t v(\bx,t)|_{_{_{t=0}}}&=v_1(\bx)
\end{split}
\right.
\end{equation}
where $v_0,v_1\in L^1(\mathbb{R}^n,\mathrm{d}\bphi)$, $\mathcal{F}_{\bphi}v_0,\mathcal{F}_{\bphi}v_1\in L^1(\mathbb{R}^n)$, $1<\beta_0<2$ and $0<\alpha<2$. By Theorem \ref{cliffordnew} we know that the solution of equation \eqref{phiexamplewave} is given by 
\[
\left(t^{\beta_0/2}(-\Delta)^{\alpha/4}_{\bphi(\bx)}+\prescript{C}{}\partial_{t}^{\beta_0}\mathfrak{f}+\mathfrak{f}^+\right)w(\bx,t)
\] 
where $w(\bx,t)$ is the solution of the following equation:
\begin{equation}\label{example1}
\left\{
\begin{split}
\prescript{C}{}\partial_{t}^{\beta_0}w(\bx,t)+t^{\beta_0}(-\Delta)^{\alpha/2}_{\bphi(\bx)} w(\bx,t)&=0, \\
w(\bx,0)&=w_0(\bx), \\
\frac{\partial w}{\partial t}(\bx,0)&=w_1(\bx),
\end{split}\right.
\end{equation}
where $w_0,w_1\in L^1(\mathbb{R}^n,\mathrm{d}\bphi)$, $\mathcal{F}_{\bphi}w_0,\mathcal{F}_{\bphi}w_1\in L^1(\mathbb{R}^n)$, $1<\beta_0<2$ and $0<\alpha<2$. Notice now that by \cite[Example 5.12]{fractional-laplacian}, the solution of equation \eqref{example1} is given by 
\begin{align*}
&w(\bx,t)=w_0(\bx)+(\psi(t)-\psi(0))w_1(\bx) \\
&-\int_{\mathbb{R}^n}w_0\big(\bphi^{-1}\left(\bphi(\bx)-\bphi(\by)\right)\big)\times \\
&\hspace{2cm}\times\mathcal{F}_{\bphi}^{-1}\Big(|\bk|^{\alpha}I_{0+}^{\beta_0}\big[t^{\beta_0}E^{\beta_0}_{1,2\beta_0,\beta_0}\big(-|\bk|^{\alpha}t^{2\beta_0}\big)\big]\Big)(\by)\big|J\bphi(\by)\big|\,\mathrm{d}\by \\
&-\int_{\mathbb{R}^n}w_1\big(\bphi^{-1}\left(\bphi(\bx)-\bphi(\by)\right)\big)\times \\
&\hspace{2cm}\times\Big(\mathcal{F}_{\bphi}^{-1}\Big(|\bk|^{\alpha}I_{0+}^{\beta_0}\big[t^{\beta_0+1}E^{\beta_0}_{1,2\beta_0,\beta_0+1}\big(-|\bk|^{\alpha}t^{2\beta_0}\big)\big]\Big)(\by)\big|J\bphi(\by)\big|\,\mathrm{d}\by,
\end{align*}
where $E^{\lambda}_{\alpha,\beta,\gamma}(z)=\sum_{k=0}^{+\infty}c_k z^k$, $\gamma, z\in\mathbb{C},$ $\alpha,\beta\in\mathbb{R}$, $\lambda\in\mathbb{R}^+$ with
\[
c_0=1,\quad c_k=\prod_{j=0}^{k-1}\frac{\Gamma(\alpha[j\beta+\gamma]+1)}{\Gamma(\alpha[j\beta+\gamma]+\lambda+1)},\quad k=1,2,\ldots,
\]
is a generalised special function of Fox--Wright type. This is an entire function since the radius of convergence is $+\infty$. The proof of it can be done in the same way as in \cite[Lemma 5.2]{mittagbook}. Note that in the case $\lambda=\alpha$, the function $E^{\alpha}_{\alpha,\beta,\gamma}$ becomes the generalized (Kilbas--Saigo) Mittag-Leffler type function \cite[Chapter 5]{mittagbook}. Therefore, the solution of equation \eqref{phiexamplewave} can be calculated explicitly by using the above estimates.

\medskip If we consider the simpler example of heat type equation: 
\begin{equation}\label{example2}
\left\{
\begin{split}
\prescript{C}{}\partial_{t}^{\beta_0}w(\bx,t)&+t^{\beta_0}(-\Delta)^{\alpha/2}_{\bphi(\bx)} w(\bx,t)=0, \\
w(\bx,0)&=w_0(\bx),
\end{split}\right.
\end{equation}
where $w_0\in L^1(\mathbb{R}^n,\mathrm{d}\bphi)$, $\mathcal{F}_{\bphi}w_0\in L^1(\mathbb{R}^n)$, $0<\beta_0<1$ and $0<\alpha<2$. Again, it is easy to solve the heat analog of equation \eqref{phiexamplewave} ($0<\beta_0<1$ and $0<\alpha<2$)  by using Theorem \ref{cliffordnew}, and the same function $E^{\lambda}_{\alpha,\beta,\gamma}$ as in the previous example, knowing that the solution of problem \eqref{example2} is given by:
\begin{align*}
w(\bx,t)&=w_0(\bx)+\int_{\mathbb{R}^n}w_0\big(\bphi^{-1}\left(\bphi(\bx)-\bphi(\by)\right)\big)\times \\
&\hspace{1cm}\times\Big(\mathcal{F}_{\bphi}^{-1}\Big(|\bk|^{\alpha}I_{0+}^{\beta_0}\big[t^{\beta_0}E^{\beta_0}_{1,2\beta_0,\beta_0}\big(-|\bk|^{\alpha}t^{2\beta_0}\big)\big]\Big)(\by)\big|J\bphi(\by)\big|\,\mathrm{d}\by.  
\end{align*}

\section{Special cases of Dirac type operators}\label{special}

In this section, we show some relevant Dirac type operators as special case of the one introduced in formula \eqref{classdiracnew}. We also denote $I_{0+}^{\beta}$, $^{C}D_{0+}^{\beta}$ instead of $I_{0+}^{\beta,\psi}$, $^{C}D_{0+}^{\beta,\psi}$ when $\psi(t)\equiv t$. Some of the following examples can be found in \cite{BRS} but we include here for the sake of completeness. 

\subsection{ Wave Dirac type operator}
We begin with the following wave Dirac type operator:
\[ _{\bx,t}D_{t^{\alpha_0};\,t}^{1,\alpha_0}:=t^{\alpha_0/2}\,D_{\bx}+\mathfrak{f}\left(^{C}\partial_{t}^{\alpha_0}\right)+\mathfrak{f}^+,\] 
where $1<\alpha_0<2$ and $D_{\bx}=\sum_{k=1}^n e_k \partial_{\bx_k}$ is the Dirac operator, which factorizes the Laplacian as $D_{\bx}^2=-\Delta=-\sum_{k=1}^n \partial_{\bx_k}^2.$ We have that
\[(_{\bx,t}D_{t^{\alpha_0};\,t}^{1,\alpha_0})^2=-t^{\alpha_0}\Delta_{\bx}+\,^{C}\partial_{t}^{\alpha_0}.\]
Let us now recall the following fractional initial value problem
\begin{equation}\label{example1*}
\left\{
\begin{split}
^{C}\partial_{t}^{\alpha_0}w(\bx,t)-t^{\alpha_0}\Delta_{\bx} w(\bx,t)&=0, \\
w(\bx,t)|_{_{_{t=0+}}}&=w_0(\bx), \\
\partial_t w(\bx,t)|_{_{_{t=0+}}}&=w_1(\bx),
\end{split}\right.
\end{equation}
where $1<\alpha_0<2$. It was shown in Section \ref{examples} (see also  \cite[Section 3.2]{generalizedCauchyRS}) that the solution is given by
\begin{align}\label{ex1}
w(\bx,t)&=w_0(\bx)+w_1(\bx)t \nonumber\\
&-\int_{\mathbb{R}^n}\mathcal{F}^{-1}_{\bs}\big(I_{0+}^{\alpha_0}\big(|\bs|^{2}t^{\alpha_0}E^{\alpha_0}_{1,2\alpha_0,\alpha_0}(-|\bs|^{2}t^{2\alpha_0})\big)\big)(\bx-\by)w_0(\by){\rm d}\by\nonumber \\
&-\int_{\mathbb{R}^n}\mathcal{F}^{-1}_{\bs}\big(I_{0+}^{\alpha_0}\big(|\bs|^{2}t^{\alpha_0+1}E^{\beta_0}_{1,2\alpha_0,\alpha_0+1}(-|\bs|^{2}t^{2\alpha_0})\big)(\bx-\by)w_1(\by){\rm d}\by.
\end{align}

By using the above solution we can establish the following result. 

\begin{cor}
Let $1<\alpha_0<2$. Then the fractional Cauchy problem of wave type
\begin{equation}\label{ex2clifford}
\left\{ \begin{split}
\left(t^{\alpha_0/2}D_{\bx}+\mathfrak{f}\left(^{C}\partial_{t}^{\alpha_0}\right)+\mathfrak{f}^+\right)v(\bx,t)&=0,\,\, \bx\in\mathbb{R}^n,\,\,t\in (0,T],\\
v(\bx,t)|_{_{_{t=0}}}&=h_0(\bx), \\
\partial_t v(\bx,t)|_{_{_{t=0}}}&=h_1(\bx),\\
\end{split}
\right.
\end{equation}
can be solved, and the solution is given by
\begin{align*}
&v(\bx,t)=t^{\alpha_0/2}D_{\bx}(h_0(\bx))+t^{1+\alpha_0/2}D_{\bx}(h_1(\bx))\\ 
&+\frac{t^{\alpha_0/2}}{\Gamma(\alpha_0)}\int_{\mathbb{R}^n}h_0(\by)\left(\int_0^t (t-u)^{\alpha_0-1}u^{\alpha_0} \frac{(\bx-\by)}{(2\pi|\bx-\by|)^{n/2}}\times \right. \\
&\hspace{2cm}\left.\times\int_0^{+\infty}r^{2}E^{\alpha_0}_{1,2\alpha_0,\alpha_0}(-r^{2}u^{2\alpha_0})r^{n/2}rJ_{\frac{n}{2}}(r|\bx-\by|){\rm d}r{\rm d}u\right){\rm d}\by \\
&+\frac{t^{\alpha_0/2}}{\Gamma(\alpha_0)}\int_{\mathbb{R}^n}h_1(\by)\left(\int_0^t (t-u)^{\alpha_0-1}u^{\alpha_0+1} \frac{(\bx-\by)}{(2\pi|\bx-\by|)^{n/2}}\times\right. \\
&\hspace{2cm}\times\left.\int_0^{+\infty}r^{2}E^{\alpha_0}_{1,2\alpha_0,\alpha_0+1}(-r^{2}u^{2\alpha_0})r^{n/2}rJ_{\frac{n}{2}}(r|\bx-\by|){\rm d}r{\rm d}u\right){\rm d}\by \\
&+\mathfrak{f}\left(-t^{\alpha_0}\int_{\mathbb{R}^n}h_0(\by)\left(\frac{|\bx-\by|^{1-n/2}}{(2\pi)^{n/2}}\times \right.\right.\\
&\hspace{2cm}\left.\left.\times\int_0^{+\infty}r^{2}E^{\alpha_0}_{1,2\alpha_0,\alpha_0}(-r^{2}t^{2\alpha_0})r^{n/2}J_{\frac{n}{2}-1}(r|\bx-\by|){\rm d}r\right){\rm d}y\right. \\
&-\int_{\mathbb{R}^n}h_1(\by)\left(\frac{|\bx-\by|^{1-n/2}}{(2\pi)^{n/2}}\times\right. \\
&\hspace{2cm}\times\left.\left.\int_0^{+\infty}r^{2}E^{\alpha_0}_{1,2\alpha_0,\alpha_0+1}(-r^{2}t^{2\alpha_0})r^{n/2}J_{\frac{n}{2}-1}(r|\bx-\by|){\rm d}r\right){\rm d}\by\right) \\
&\mathfrak{f}^{+}\bigg(h_0(\bx)+h_1(\bx)t\bigg. \\
&-\frac{1}{\Gamma(\alpha_0)}\int_{\mathbb{R}^n}h_0(\by)\left(\int_0^t (t-u)^{\alpha_0-1}u^{\alpha_0} \frac{|\bx-\by|^{1-n/2}}{(2\pi)^{n/2}}\times \right. \\
&\hspace{2cm}\left. \times\int_0^{+\infty}r^{2}E^{\alpha_0}_{1,2\alpha_0,\alpha_0}(-r^{2}u^{2\alpha_0})r^{n/2}J_{\frac{n}{2}-1}(r|\bx-\by|){\rm d}r{\rm d}u\right){\rm d}\by \\
&-\frac{1}{\Gamma(\alpha_0)}\int_{\mathbb{R}^n}h_1(\by)\left(\int_0^t (t-u)^{\alpha_0-1}u^{\alpha_0+1} \frac{|\bx-\by|^{1-n/2}}{(2\pi)^{n/2}}\times\right. \\
&\hspace{2cm}\times\left.\left.\int_0^{+\infty}r^{2}E^{\alpha_0}_{1,2\alpha_0,\alpha_0+1}(-r^{2}u^{2\alpha_0})r^{n/2}J_{\frac{n}{2}-1}(r|\bx-\by|){\rm d}r{\rm d}u\right){\rm d}\by\right).
\end{align*}
\end{cor}
\begin{proof}
By Theorem \ref{cliffordnew} we have that the solution of equation \eqref{ex2clifford} is given by the application of $\big(t^{\beta_0/2}\,D_{\bx}+\mathfrak{f}\left(^{C}\partial_{t}^{\beta_0}\right)+\mathfrak{f}^+\big)$ to the representation \eqref{ex1}. Let us then calculate each component of the solution. First we need to recall some useful estimates. Note that formula (25.11) in \cite[Lemma 25.1]{samko} implies that 
\begin{equation}\label{radial}
\frac{1}{(2\pi)^n}\int_{\mathbb{R}^n}e^{-i\bs\cdot \bx}\varphi(|\bs|){\rm d}\bs=\frac{|\bx|^{1-n/2}}{(2\pi)^{n/2}}\int_0^{+\infty}\varphi(r)r^{n/2}J_{\frac{n}{2}-1}(r|\bx|){\rm d}r,
\end{equation}
where $J_\nu$ denotes the Bessel function with index $\nu$ (for more details see e.g.  \cite{new1}) and $\varphi$ is a radial function such that 
\[\int_0^{+\infty}\tau^{n-1}(1+\tau)^{(1-n)/2}|\varphi(\tau)|{\rm d}\tau<+\infty,\]
provided that the integral on the left-hand side of \eqref{radial} is interpreted as conventionally convergent. It converges absolutely if 
\[\int_0^{+\infty}\tau^{n-1}|\varphi(\tau)|{\rm d}\tau<+\infty.\]
And, we also have that \cite{ferreira2017mmas}:
\begin{equation}\label{ferreira9}
D_{\bx}\left(|\bx|^{1-\frac{n}{2}}J_{\frac{n}{2}-1}(r|\bx|)\right)=-\frac{r\bx}{|\bx|^{\frac{n}{2}}}J_{\frac{n}{2}}(r|\bx|),\quad \bx\in\mathbb{R}^n,\,\,r\geqslant0.
\end{equation}
By \eqref{ex1} and \eqref{radial} we get
\begin{align}
&w(\bx,t)=h_0(\bx)+h_1(\bx)t\nonumber \\
&-\frac{1}{\Gamma(\alpha_0)}\int_{\mathbb{R}^n}h_0(\by)\left(\int_0^t (t-u)^{\alpha_0-1}u^{\alpha_0} \frac{|\bx-\by|^{1-n/2}}{(2\pi)^{n/2}}\times \right. \nonumber \\
&\hspace{2cm}\left. \times\int_0^{+\infty}r^{2}E^{\alpha_0}_{1,2\alpha_0,\alpha_0}(-r^{2}u^{2\alpha_0})r^{n/2}J_{\frac{n}{2}-1}(r|\bx-\by|){\rm d}r{\rm d}u\right){\rm d}\by\nonumber \\
&-\frac{1}{\Gamma(\alpha_0)}\int_{\mathbb{R}^n}h_1(\by)\left(\int_0^t (t-u)^{\alpha_0-1}u^{\alpha_0+1} \frac{|\bx-\by|^{1-n/2}}{(2\pi)^{n/2}}\times\right.\nonumber \\
&\hspace{2cm}\times\left.\int_0^{+\infty}r^{2}E^{\alpha_0}_{1,2\alpha_0,\alpha_0+1}(-r^{2}u^{2\alpha_0})r^{n/2}J_{\frac{n}{2}-1}(r|\bx-\by|){\rm d}r{\rm d}u\right){\rm d}\by.\label{solutionexample}
\end{align}
Let us calculate each component of $_{\bx,t}D_{t^{\alpha_0};\,t}^{1,\alpha_0}w(\bx,t)$ where $w(\bx,t)$ is given in  \eqref{solutionexample}. 

By \eqref{ferreira9}, we get the first component:
\begin{align*}
t^{\alpha_0/2}&\,D_{\bx} w(\bx,t)=t^{\alpha_0/2}D_{\bx}(h_0(\bx))+t^{1+\alpha_0/2}D_{\bx}(h_1(\bx)) \\
&+\frac{t^{\alpha_0/2}}{\Gamma(\alpha_0)}\int_{\mathbb{R}^n}h_0(\by)\left(\int_0^t (t-u)^{\alpha_0-1}u^{\alpha_0} \frac{(\bx-\by)}{(2\pi|\bx-\by|)^{n/2}}\times \right. \\
&\hspace{1.5cm}\left. \times\int_0^{+\infty}r^{2}E^{\alpha_0}_{1,2\alpha_0,\alpha_0}(-r^{2}u^{2\alpha_0})r^{n/2}rJ_{\frac{n}{2}}(r|\bx-\by|){\rm d}r{\rm d}u\right){\rm d}\by \\
&+\frac{t^{\alpha_0/2}}{\Gamma(\alpha_0)}\int_{\mathbb{R}^n}h_1(\by)\left(\int_0^t (t-u)^{\alpha_0-1}u^{\alpha_0+1} \frac{(\bx-\by)}{(2\pi|\bx-\by|)^{n/2}}\times\right. \\
&\hspace{1.5cm}\times\left.\int_0^{+\infty}r^{2}E^{\alpha_0}_{1,2\alpha_0,\alpha_0+1}(-r^{2}u^{2\alpha_0})r^{n/2}rJ_{\frac{n}{2}}(r|\bx-\by|){\rm d}r {\rm d}u\right){\rm d}\by.
\end{align*}
By \eqref{solutionexample} and Theorem \ref{dirl} we obtain the second component as follows:
\begin{align*}
&^{C}\partial_t^{\alpha_0}w(\bx,t) \\
&=-t^{\alpha_0}\int_{\mathbb{R}^n}h_0(\by)\left(\frac{|\bx-\by|^{1-n/2}}{(2\pi)^{n/2}}\times \right. \\
&\hspace{2cm}\left. \times\int_0^{+\infty}r^{2}E^{\alpha_0}_{1,2\alpha_0,\alpha_0}(-r^{2}t^{2\alpha_0})r^{n/2}J_{\frac{n}{2}-1}(r|\bx-\by|){\rm d}r\right){\rm d}\by \\
&-t^{\alpha_0+1}\int_{\mathbb{R}^n}h_1(\by)\left(\frac{|\bx-\by|^{1-n/2}}{(2\pi)^{n/2}}\times\right. \\
&\hspace{2cm}\times\left.\int_0^{+\infty}r^{2}E^{\alpha_0}_{1,2\alpha_0,\alpha_0+1}(-r^{2}t^{2\alpha_0})r^{n/2}J_{\frac{n}{2}-1}(r|\bx-\by|){\rm d}r\right){\rm d}\by,
\end{align*}
and this completes the proof.
\end{proof}

Similarly as the above statement the following assertion can be obtained.  
\begin{cor}
Let $0<\alpha_0<1$. Then the fractional Cauchy problem of heat type 
\begin{equation}
\left\{ \begin{split}
\left(t^{\alpha_0/2}D_{\bx}+\mathfrak{f}\left(^{C}\partial_{t}^{\alpha_0}\right)+\mathfrak{f}^+\right)v(\bx,t)&=0,\,\, \bx\in\mathbb{R}^n,\,\,t\in (0,T],\\
v(\bx,t)|_{_{_{t=0}}}&=h_0(\bx),  \\
\end{split}
\right.
\end{equation}
can be solved, and the solution is given by
\begin{align*}
&v(\bx,t)=t^{\alpha_0/2}D_{\bx}(h_0(\bx))+\frac{t^{\alpha_0/2}}{\Gamma(\alpha_0)}\int_{\mathbb{R}^n}h_0(\by)\left(\int_0^t (t-u)^{\alpha_0-1}u^{\alpha_0} \times \right. \\
&\hspace{0.5cm}\left.\times\frac{(\bx-\by)}{(2\pi|\bx-\by|)^{n/2}}\int_0^{+\infty}r^{2}E^{\alpha_0}_{1,2\alpha_0,\alpha_0}(-r^{2}u^{2\alpha_0})r^{n/2}rJ_{\frac{n}{2}}(r|\bx-\by|){\rm d}r {\rm d}u\right){\rm d}\by \\
&\hspace{1cm}+\mathfrak{f}\left(-t^{\alpha_0}\int_{\mathbb{R}^n}h_0(\by)\left(\frac{|\bx-\by|^{1-n/2}}{(2\pi)^{n/2}}\times \right.\right.\\
&\hspace{2cm}\left.\left.\left.\times\int_0^{+\infty}r^{2}E^{\alpha_0}_{1,2\alpha_0,\alpha_0}(-r^{2}t^{2\alpha_0})r^{n/2}J_{\frac{n}{2}-1}(r|\bx-\by|){\rm d}r\right){\rm d}\by\right.\right) \\
&\hspace{1cm}+\mathfrak{f}^{+}\left(h_0(\bx)-\frac{1}{\Gamma(\alpha_0)}\int_{\mathbb{R}^n}h_0(\by)\left(\int_0^t (t-u)^{\alpha_0-1}u^{\alpha_0} \frac{|\bx-\by|^{1-n/2}}{(2\pi)^{n/2}}\times \right.\right. \\
&\hspace{2cm}\left.\left.\times\int_0^{+\infty}r^{2}E^{\alpha_0}_{1,2\alpha_0,\alpha_0}(-r^{2}u^{2\alpha_0})r^{n/2}J_{\frac{n}{2}-1}(r|\bx-\by|){\rm d}r {\rm d}u\right){\rm d}\by\right). 
\end{align*}
\end{cor}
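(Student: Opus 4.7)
The plan is to mirror, line-by-line, the argument used in the preceding wave-type Corollary, specializing it to the range $0<\alpha_0\leqslant1$ where $n_0=n_1=1$ and so only the single initial datum $h_0$ appears. First, I would recognize that the Cauchy problem under consideration is exactly \eqref{eq1Cauchynewclinew2} with $m=1$, $\beta_0=\alpha_0$, $\lambda=1$, $\phi(t)=t$ and $\Theta_1(t)=t^{\alpha_0}$, and thus apply Theorem \ref{cliffordnew2} to conclude that the sought solution is $v(x,t)={}_{x,t}D_{t^{\alpha_0};\,t}^{1,\alpha_0}w(x,t)$, where $w$ solves the scalar fractional heat-type equation obtained from the factorization \eqref{factornew}:
\begin{equation*}
{}^C\partial_t^{\alpha_0}w(x,t)-t^{\alpha_0}\Delta_x w(x,t)=0,\qquad w(x,0)=h_0(x).
\end{equation*}
This is the single-initial-datum analogue of \eqref{example1}, and by the same derivation that produced \eqref{ex1} in \cite{generalizedCauchyRS} its solution is
\begin{equation*}
w(x,t)=h_0(x)-\int_{\mathbb{R}^n}\mathcal{F}^{-1}_s\bigl(I_{0+}^{\alpha_0}\bigl(|s|^{2}t^{\alpha_0}E^{\alpha_0}_{1,2\alpha_0,\alpha_0}(-|s|^{2}t^{2\alpha_0})\bigr)\bigr)(x-y)h_0(y)\,dy.
\end{equation*}

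Next, I would compute each of the three components of ${}_{x,t}D_{t^{\alpha_0};\,t}^{1,\alpha_0}w$ separately. The $\mathfrak{f}^+$-component is $w(x,t)$ itself, which is already in closed form; I then convert the inverse Fourier transform to a radial Bessel integral by applying \eqref{radial} to the radial multiplier $|s|^2 E^{\alpha_0}_{1,2\alpha_0,\alpha_0}(-|s|^2 u^{2\alpha_0})$ and noting that $I_{0+}^{\alpha_0}$ acts in $t$ and can be written as the Riemann-Liouville integral $\frac{1}{\Gamma(\alpha_0)}\int_0^t(t-u)^{\alpha_0-1}(\cdot)\,du$. This yields exactly the $\mathfrak{f}^+$-bracket stated.

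For the first (vector) component $t^{\alpha_0/2}D_x w(x,t)$ I would pull $D_x$ inside the convolution and use the Bessel identity \eqref{ferreira9}, which converts the factor $|x-y|^{1-n/2}J_{n/2-1}(r|x-y|)$ into $-\tfrac{r(x-y)}{|x-y|^{n/2}}J_{n/2}(r|x-y|)$; the resulting sign cancels the minus sign coming from the representation of $w$, and the prefactor $t^{\alpha_0/2}$ just sits in front. For the $\mathfrak{f}$-component I apply ${}^C\partial_t^{\alpha_0}$ to $w$: the datum $h_0(x)$ has vanishing Caputo derivative, and on the convolution term Theorem \ref{dirl} gives ${}^C\partial_t^{\alpha_0}I_{0+}^{\alpha_0}=\mathrm{id}$, so the time integration disappears and only the factor $|s|^2 t^{\alpha_0} E^{\alpha_0}_{1,2\alpha_0,\alpha_0}(-|s|^2 t^{2\alpha_0})$ remains; transporting this back via \eqref{radial} produces the $\mathfrak{f}$-bracket in the statement.

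The bookkeeping of signs, the correct application of \eqref{radial} to radial symbols, and the verification that Theorem \ref{dirl} may be applied (which requires the kernel to lie in $L^1$, in line with the standing hypotheses recalled before Theorem \ref{cor3.2.1CP}) are the only delicate steps; none of them is genuinely hard, but the main obstacle is simply to keep the three components synchronized with the formula in the statement, because a sign error or a stray factor in the Bessel reduction propagates through all three brackets. Once the components are assembled, adding them with the Witt-basis coefficients $1,\mathfrak{f},\mathfrak{f}^+$ gives precisely the claimed representation.
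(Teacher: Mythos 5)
Your proposal is correct and follows essentially the same route the paper intends: the paper omits this proof, stating it is obtained ``similarly'' to the preceding wave-type corollary, and your argument is exactly that adaptation (factorize via \eqref{factornew}, solve the scalar problem, then compute the three Witt-basis components using \eqref{radial}, \eqref{ferreira9} and Theorem \ref{dirl}). The only quibble is your bookkeeping of $n_1$: with $m=1$ and $\beta_m=0$ one has $n_1=0<n_0=1$, so strictly Theorem \ref{cliffordnew} rather than \ref{cliffordnew2} applies, but since $\varkappa_0=1=m$ the kernels $K_0^{\varkappa_0}$ and $K_0$ coincide and the conclusion is unaffected.
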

\subsection{ Fractional telegraph Dirac operator} Now we consider the following fractional telegraph Dirac operator
\[ _{\bx,t}D_{a,c;\,t}^{1,\alpha_0,\alpha_1}:=c\,D_{\bx}+\mathfrak{f}\left(^{C}\partial_{t}^{\alpha_0}+a ^{C}\partial_{t}^{\alpha_1}\right)+\mathfrak{f}^+,\] 
where $D_{\bx}$ is the Dirac operator, $a\geqslant0$, $c>0$, $0<\alpha_1\leqslant1$ and $1<\alpha_0<2$. We have 
\[(_{\bx,t}D_{a,c;\,t}^{1,\alpha_0,\alpha_1})^2=-c^2\,\Delta_{\bx}+\,^{C}\partial_{t}^{\alpha_0}+a\,^{C}\partial_{t}^{\alpha_1}.\]

Here we consider the case of constant coefficients. By using the obtained results we can directly prove the following statement. Nevertheless, we omit all calculations since it is also an analogue of \cite[Theorem 4.1]{ferreira2018}.    

\begin{cor}
Let $1<\alpha_0<2$, $0<\alpha_1\leqslant 1$, $a\geqslant0$ and $c>0$. Then the fractional Cauchy type problem 
\begin{equation}\label{constante}
\left\{ \begin{split}
\left(cD_{\bx}+\mathfrak{f}\left(^{C}\partial_{t}^{\alpha_0}+a\,^{C}\partial_{t}^{\alpha_1}\right)+\mathfrak{f}^+\right)v(\bx,t)&=0,\quad \bx\in\mathbb{R}^n,\,\,t\in (0,T],\\
v(\bx,t)|_{_{_{t=0}}}&=h_0(\bx), \\
\partial_t v(\bx,t)|_{_{_{t=0}}}&=h_1(\bx),\\
\end{split}
\right.
\end{equation}
can be solved, and the solution is given by
\begin{align*}
v(\bx,t)=\int_{\mathbb{R}^n}\mathfrak{H}_0^{\alpha_0,\alpha_1}(\bx-\by,t)h_0(\by){\rm d}\by+\int_{\mathbb{R}^n}\mathfrak{H}_1^{\alpha_0,\alpha_1}(\bx-\by,t)h_1(\by){\rm d}\by, 
\end{align*}
where $\mathfrak{H}_0^{\alpha_0,\alpha_1}$ and $\mathfrak{H}_1^{\alpha_0,\alpha_1}$ are the first and second fundamental solutions given in formulas (4.3) and (4.4) of \cite{ferreira2018}.\end{cor}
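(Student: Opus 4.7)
The plan is to reduce the first-order fractional Dirac Cauchy problem \eqref{constante} to a second-order fractional telegraph equation by exploiting the factorization \eqref{factornewconstant}, and then appeal to the general explicit-solution Theorem \ref{cliffordnewconstant2} specialized to the constant-coefficient, $\phi(t)=t$ setting. With the parameters from the statement, we have $1<\alpha_0\leqslant 2$ and $1<\alpha_1\leqslant 2$, so $n_0=n_1=2$; hence the relevant framework is that of Theorem \ref{cliffordnewconstant2} taken with $m=2$, $\beta_0=\alpha_0$, $\beta_1=\alpha_1$, $\lambda=1$, $\lambda_1=a$, $\lambda_2=c^{2}$ and $\phi(t)=t$.

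First I would write out the representation produced by Theorem \ref{cliffordnewconstant2} in full, separating the three contributions: the scalar $\lambda_m^{1/2}(-\Delta)_x^{\lambda/2}$-piece, the $\mathfrak{f}$-piece, and the $\mathfrak{f}^+$-piece. Each of these pieces is a sum of inverse Fourier transforms of products of bivariate Mittag--Leffler functions of the form $E_{(\alpha_0-\alpha_1,\alpha_0),\gamma}\!\bigl(a\,t^{\alpha_0-\alpha_1},\,c^{2}|r|^{2}t^{\alpha_0}\bigr)$ acting against the initial data $h_0$ and $h_1$, with various shifts $\gamma$.

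The core step is then to identify these multivariate Mittag--Leffler inverse Fourier transforms with the explicit Clifford-valued fundamental solutions $\mathfrak{H}_0^{\alpha_0,\alpha_1}$ and $\mathfrak{H}_1^{\alpha_0,\alpha_1}$ constructed in formulas (4.3) and (4.4) of \cite{ferreira2018}. This is done exactly as in the previous wave Dirac corollary: go to Fourier space, use the radial reduction \eqref{radial} and the Bessel identity \eqref{ferreira9} to evaluate the action of $D_x$ and of $(-\Delta)_x^{1/2}$ on the scalar kernels, and then regroup the terms arising from the $\mathfrak{f}$, $\mathfrak{f}^+$ and scalar blocks so that they collapse, together with the contributions from $D_x$, into the single convolution representation with $\mathfrak{H}_0^{\alpha_0,\alpha_1}$ and $\mathfrak{H}_1^{\alpha_0,\alpha_1}$.

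The hard part will be the combinatorial bookkeeping required to regroup the many Mittag--Leffler terms coming out of Theorem \ref{cliffordnewconstant2} so that they resum cleanly into the two Clifford-valued fundamental solutions of \cite{ferreira2018}, and to check that the initial conditions $v(x,0)=h_0(x)$ and $\partial_t v(x,0)=h_1(x)$ are correctly recovered. Since each of these computations is strictly parallel to the proof of \cite[Theorem 4.1]{ferreira2018}, with only the replacement of their integer orders by $\alpha_0,\alpha_1$ and the systematic use of Theorem \ref{dirl}, the detailed calculations are omitted, and uniqueness is inherited from the uniqueness already proved in Theorem \ref{cliffordnewconstant2}.
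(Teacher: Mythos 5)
Your proposal follows essentially the same route as the paper: the paper explicitly omits all calculations for this corollary, stating only that it follows from the constant-coefficient results (the analogue of Theorem \ref{cliffordnewconstant2} with $m=2$, $\beta_0=\alpha_0$, $\beta_1=\alpha_1$, $\beta_2=0$, $\lambda=1$, $\phi(t)=t$) and is an analogue of \cite[Theorem 4.1]{ferreira2018}. Your sketch — factorization, application of the constant-coefficient theorem with $n_0=n_1=2$, and identification of the multivariate Mittag--Leffler kernels with $\mathfrak{H}_0^{\alpha_0,\alpha_1}$, $\mathfrak{H}_1^{\alpha_0,\alpha_1}$ via the radial reduction \eqref{radial} and the Bessel identity \eqref{ferreira9} — is exactly the intended argument, spelled out in more detail than the paper provides.
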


For the particular case $a=0$, the above result coincides with the result in \cite{ferreira2017} for the time-fractional parabolic Dirac operator.

\section{Inverse problems}\label{directpro}

Now we combine some results given in Section \ref{mainresults} with a new method to recover the variable coefficient of an inverse Cauchy type problem by the consideration of two (direct) fractional Cauchy type problems. The method was introduced recently in \cite{su20}, and extended to some fractional differential equations in \cite{generalizedCauchyRS} as well. In this section, we extend some recent results from \cite{BRS} by using the Riemann-Liouville fractional derivative of complex order, with respect to another function. Here we just consider the fractional Laplacian in the classical sense since we are able to use some well-known properties of this operator which could not hold for the case of the fractional Laplacian with respect to a function. Some recent results of \cite{generalizedCauchyRS,RRS} are used to establish the newer statements. We mainly focus in solving some inverse fractional Cauchy problems of wave and heat type.

\subsection{ Fractional wave type equations} We will study how to recover the variable coefficient $\rho(t)$ for the following fractional Cauchy problem of wave type:
\begin{equation}\label{eq1Cauchylast}
\left\{ \begin{split}
\big(\rho^{1/2}(t)D_{\bx}+\mathfrak{f}\big(\,^{C}\partial_t^{\alpha,\psi}\big)+\mathfrak{f}^{+}\big)w(\bx,t)&=0, 
\qquad \bx \in \mathbb R^{n}, \,\, 0<t\leqslant T<+\infty,\\
w(\bx,t)|_{_{_{t=0}}}&=w_0(\bx), \\
\partial_t w(\bx,t)|_{_{_{t=0}}}&=w_1(\bx),
\end{split}
\right.
\end{equation}
where $1<\alpha<2$, $\rho(t)$ is a continuous function and $w_{0,1}$ are to be satisfied certain hypothesis. As usual, we denote  
\begin{align*}
^{C}\partial_t^{\alpha,\psi}w(\bx,t)&:=\,^{C}D_{0+}^{\alpha,\psi}w(\bx,t) \\
&=D_{0+}^{\alpha,\psi}\big(w(\bx,t)-w(\bx,t)|_{_{_{t=0}}}-\partial_t w(\bx,t)|_{_{_{t=0}}}(\psi(t)-\psi(0))\big).
\end{align*}
Notice that the formal passage $\alpha\to1$ transforms the equation \eqref{eq1Cauchylast} to the heat type, while \eqref{eq1Cauchylast} transforms into a wave type equation in the particular case $\alpha=2$. Let us now remember (see formula \ref{factornew}) that applying $\big(\rho^{1/2}(t)D_{\bx}+\mathfrak{f}\big(\,^{C}\partial_t^{\alpha}\big)+\mathfrak{f}^{+}\big)$ to equation \eqref{eq1Cauchylast}, it follows that 
\begin{equation}\label{eq1Cauchylast1}
\left\{\begin{split}
\big(-\rho(t)\Delta_{\bx}+\,^{C}\partial_t^{\alpha,\psi}\big)w(\bx,t)&=0, 
\qquad \bx\in \mathbb R^{n}, \quad 0<t\leqslant T<+\infty,\\
w(\bx,t)|_{_{_{t=0}}}&=w_0(\bx), \\
\partial_t w(\bx,t)|_{_{_{t=0}}}&=w_1(\bx),
\end{split}
\right.
\end{equation}
whose solution is given by (Theorem \ref{cor3.2.1CP})
\begin{align}
w(\bx,t)-w_0(\bx)-&w_1(\bx)(\psi(t)-\psi(0))=\int_{\mathbb{R}^n}\big(\mathcal{F}^{-1}_{\bs}\mathcal{H}_1(t,|\bs|^{2},\rho\big)(\bx-\by)w_1(\by){\rm d}\by \nonumber\\
&+\int_{\mathbb{R}^n}\big(\mathcal{F}^{-1}_{\bs}\mathcal{H}_0(t,|\bs|^{2},\rho\big)(\bx-\by)w_0(\by){\rm d}\by.\label{transsolution}
\end{align}
We then study the following two direct fractional Cauchy type problems with the aim of recovering the continuous (in time) variable coefficient $\rho$: 
\begin{equation}\label{eq1invlast}
\left\{ \begin{split}
\big(\rho^{1/2}(t)D_{\bx}+\mathfrak{f}\big(\,^{C}\partial_t^{\alpha,\psi}\big)+\mathfrak{f}^{+}\big)w(\bx,t)&=0, 
\qquad \bx\in\mathbb{R}^{n}, \quad 0<t\leqslant T<+\infty,\\
w(\bx,t)|_{_{_{t=0}}}&=0, \\
\partial_t w(\bx,t)|_{_{_{t=0}}}&=w_1(\bx), 
\end{split}
\right.
\end{equation}
and 
\begin{equation}\label{eq2invlast}
\left\{ \begin{split}
\big(\rho^{1/2}(t)D_{\bx}+\mathfrak{f}\big(\,^{C}\partial_t^{\alpha,\psi}\big)+\mathfrak{f}^{+}\big)v(\bx,t)&=0, 
\qquad \bx \in \mathbb{R}^{n}, \quad 0<t\leqslant T<+\infty,\\
v(\bx,t)|_{_{_{t=0}}}&=0, \\
\partial_t v(\bx,t)|_{_{_{t=0}}}&=\Delta_{\bx} w_1(\bx),
\end{split}
\right.
\end{equation}
where $\Omega\subset\mathbb{R}^n$ is an open, bounded set with piecewise smooth boundary $\partial\Omega$ and $w_1\in C^2 (\Omega)\cap C^1 (\overline{\Omega})$ is a given function which satisfy $\text{supp}(w_1)\subset\Omega$. To apply the method showed in \cite[Section 5]{generalizedCauchyRS}, we need an additional initial data at a fixed point $\bq\in\Omega$. Indeed, let us fix an arbitrary observation point $\bq\in\Omega$ for two time dependent values:
\[h_1(t):=w(\bx,t)|_{_{_{\bx=\bq}}}-w_1(\bx)|_{_{_{\bx=\bq}}}(\psi(t)-\psi(0)),\quad h_2(t):=v(\bx,t)|_{_{_{\bx=\bq}}},\,\, 0<t\leqslant T,\] 
where $w(\bx,t)$ and $v(\bx,t)$ are the solutions (at the considered time) of the transformed  equations \eqref{eq1invlast} and \eqref{eq2invlast} after the application of $\big(\rho^{1/2}(t)D_{\bx}+\mathfrak{f}\big(\,^{C}\partial_t^{\alpha,\psi}\big)+\mathfrak{f}^{+}\big)$ respectively. In the proof of the next result will be very clear where those quantities play their role. 

\medskip Let us now establish one of the main results in this section.

\begin{thm}\label{inversewavelast}
Let the following conditions be satisfied:
 \begin{enumerate}
 \item $w_1\in C^2 (\Omega)\cap C^1(\overline{\Omega})$ and $\text{supp}(w_1)\subset\Omega$,
 \item $h_2\in C^{2}[0,T]$ and $h_2(t)\neq0$ for any $t\in(0,T)$,
 \item $\frac{D_{0+}^{\alpha,\psi}h_1(t)}{h_2(t)}\geqslant K>0$ for any $t\in(0,T]$. 
 \end{enumerate}
 Then, the fractional inverse Cauchy problem \eqref{eq1invlast}, \eqref{eq2invlast} has a solution given by
 \[\rho(t)=\frac{D_{0+}^{\alpha,\psi}h_1(t)}{h_2(t)},\quad t\in(0,T].\]
\end{thm}
\begin{proof}
By Theorem \ref{cor3.2.1CP}, the solutions of equations \eqref{eq1invlast} and \eqref{eq2invlast} are given by the application of $\big(\rho^{1/2}(t)D_{\bx}+\mathfrak{f}\big(\,^{C}\partial_t^{\alpha}\big)+\mathfrak{f}^{+}\big)$ to the following representations respectively:
\[w(\bx,t)-w_1(\bx)(\psi(t)-\psi(0))=\int_{\mathbb{R}^n}\mathcal{F}^{-1}_{\bs}\big(\mathcal{H}_1(t,|\bs|^{2},\rho\big)(\bx-\by)\big(\chi_{\Omega}w_1\big)(\by){\rm d}\by,\]
and
\[v(\bx,t)-\Delta_{\bx} w_1(\bx)(\psi(t)-\psi(0))=\hspace{-0.2cm}\int_{\mathbb{R}^n}\mathcal{F}^{-1}_{\bs}\big(\mathcal{H}_1(t,|\bs|^{2},\rho\big)(\bx-\by)\big(\chi_\Omega \Delta_{\by} w_1\big)(\by){\rm d}\by,\]
where $\chi_{\Omega}$ is the characteristic function of $\Omega$. Due to the additional data at $\bq\in\Omega$,
\begin{align*}
h_1(t)=\int_{\Omega}\mathcal{F}^{-1}_{\bs}\big(\mathcal{H}_1(t,|\bs|^2,\rho)\big)(\bx-\by)|_{_{\bx=\bq}}w_1(\by){\rm d}\by,
\end{align*}  
and 
\begin{align*}
h_2(t)=\Delta_{\bx} &w_1(\bx)|_{_{_{\bx=q}}}(\psi(t)-\psi(0)) \\
&+\int_{\Omega}\mathcal{F}^{-1}_{\bs}\big(\mathcal{H}_1(t,|\bs|^2,\rho)\big)(\bx-\by)|_{_{\bx=\bq}}\Delta_{\by} w_1(\by){\rm d}\by.
\end{align*}  
By \eqref{eq1Cauchylast1}, \eqref{transsolution}, the definition of $h_1$ and $h_2$ at the point $\bx=\bq$, and by the Green second formula we arrive at  
\begin{align*}
&D^{\alpha,\psi}_{0+}h_1(t)=\,^{C}\partial_t^{\alpha,\psi}w(\bx,t)|_{_{_{\bx=\bq}}} \\
&=\rho(t)\Delta_{\bx}\left(\int_{\Omega}\mathcal{F}^{-1}_{\bs}\big(\mathcal{H}_1(t,|\bs|^2,\rho)\big)(\bx-\by)|_{_{\bx=\bq}}w_1(\by){\rm d}\by \right.\\
&\hspace{7cm}\left.+w_1(\bx)|_{_{_{\bx=\bq}}}(\psi(t)-\psi(0))\right) \\
&=\rho(t)\bigg(\int_{\Omega}\Delta_{\by} \left(\mathcal{F}^{-1}_{\bs}\big(\mathcal{H}_1(t,|\bs|^2,\rho)\big)(\bx-\by)|_{_{\bx=\bq}}\right)w_1(\by){\rm d}\by \bigg. \\
&\hspace{7cm}\bigg.+\Delta_{\bx} w_1(\bx)|_{_{_{\bx=\bq}}}(\psi(t)-\psi(0))\bigg) \\
&=\rho(t)\left(\int_{\Omega}\mathcal{F}^{-1}_{\bs}\big(\mathcal{H}_1(t,|\bs|^2,\rho)\big)(\bx-\by)|_{_{\bx=\bq}}\Delta_{\by}w_1(\by){\rm d}\by\right. \\
&\hspace{2cm}\left.+\Delta_{\bx} w_1(\bx)|_{_{_{\bx=\bq}}}(\psi(t)-\psi(0))\right)=\rho(t)\big(v(\bx,t)|_{_{_{\bx=\bq}}}\big)=\rho(t)h_2(t).
\end{align*}    
Notice that we are abusing of the notation above by evaluating at the point $\bx=\bq$. Indeed, we first need to do the calculations and in the last step, we just evaluate at the point $\bx=\bq.$ 

On the other hand, the function $\rho$ is assumed to be continuous, therefore we require that $h_2(t)\neq0$ for any $t\in(0,T]$. Note also that we would like to get a non-trivial function and positive, hence $\rho(t)\geqslant K>0$. Thus, we have to request at the beginning $\frac{D^{\alpha}_{0+}h_1(t)}{h_2(t)}\geqslant K>0$ for any $t\in(0,T]$.   
\end{proof}

\subsection{ Fractional heat type equations} Let us study the inverse problem in recovering the thermal diffusivity $\rho$ in the following fractional Cauchy problem
\begin{equation}\label{eq1CauchyParaboliclast}
\left\{ \begin{split}
\big(\rho^{1/2}(t)D_{\bx}+\mathfrak{f}\big(\,^{C}\partial_t^{\alpha,\psi}\big)+\mathfrak{f}^{+}\big)w(\bx,t)&=0, 
\qquad \bx \in \mathbb R^{n}, \quad 0<t\leqslant T<+\infty,\\
w(\bx,t)|_{_{_{t=0}}}&=w_0(\bx), 
\end{split}
\right.
\end{equation}
where $0<\alpha<1$ and $\rho(t)>0$ is assumed to be a continuous function. In this case 
\[^{C}\partial_t^{\alpha,\psi}w(\bx,t)=\,^{C}D_{0+}^{\alpha,\psi}w(\bx,t)=D_{0+}^{\alpha,\psi}\big(w(\bx,t)-w(\bx,t)|_{_{_{t=0}}}\big),\quad \bx\in\mathbb{R}^n, \,\,t>0,\]
and we get the heat type equation in the particular case $\alpha=1$ in \eqref{eq1CauchyParaboliclast}. 

To reconstruct the variable coefficient in \eqref{eq1CauchyParaboliclast}, we use a similar procedure to the case of the fractional wave equation. As before, we suppose that $\Omega\subset\mathbb{R}^n$ is an open, bounded set with a piecewise smooth boundary $\partial\Omega$ and $\bq\in\Omega$ is a fixed point. Thus, we solve the inverse problem \eqref{eq1CauchyParaboliclast} by studying two fractional Cauchy problems with additional data at the point $\bq\in\Omega$: 
\begin{equation}\label{eq1invParaboliclast}
\left\{ \begin{split}
\big(\rho^{1/2}(t)D_{\bx}+\mathfrak{f}\big(\,^{C}\partial_t^{\alpha,\psi}\big)+\mathfrak{f}^{+}\big)w(\bx,t)&=0, 
\qquad \bx \in \mathbb R^{n}, \quad 0<t\leqslant T<+\infty,\\
w(\bx,t)|_{_{_{t=0}}}&=w_1(\bx), \\
w(\bx,t)|_{_{_{\bx=\bq}}}-w_1(\bx)|_{_{_{\bx=\bq}}}&=h_1(t),
\end{split}
\right.
\end{equation}
and \begin{equation}\label{eq2invParaboliclast}
\left\{ \begin{split}
\big(\rho^{1/2}(t)D_{\bx}+\mathfrak{f}\big(\,^{C}\partial_t^{\alpha,\psi}\big)+\mathfrak{f}^{+}\big)v(\bx,t)&=0, 
\qquad \bx \in \mathbb{R}^{n}, \quad 0<t\leqslant T<+\infty,\\
v(\bx,t)|_{_{_{t=0}}}&=\Delta_{\bx} w_1(\bx),\\
v(\bx,t)|_{_{_{\bx=\bq}}}&=h_2(t),
\end{split}
\right.
\end{equation}
where $w_1\in C^2 (\Omega)\cap C^1(\overline{\Omega})$ and $\text{supp}(w_1)\subset\Omega \subset\mathbb{R}^n$. 

\begin{thm}\label{inverseheat}
Let the following conditions be satisfied:
 \begin{enumerate}
 \item $w_1\in C^1 (\Omega)\cap C^1(\overline{\Omega})$ and $\text{supp}(w_1)\subset\Omega$,
 \item $h_2\in C^{1}[0,T]$ such that $h_2(t)\neq0$ for any $t\in(0,T)$,
 \item $\frac{D_{0+}^{\alpha,\phi}h_1(t)}{h_2(t)}\geqslant K>0$ for any $t\in(0,T]$. 
 \end{enumerate}
Then, the fractional inverse Cauchy problem \eqref{eq1invParaboliclast}, \eqref{eq2invParaboliclast} has a solution given by
 \[\rho(t)=\frac{D_{0+}^{\alpha,\psi}h_1(t)}{h_2(t)},\quad t\in(0,T].\]
\end{thm}
\begin{proof}
The solutions of the fractional Cauchy problems \eqref{eq1invParaboliclast} and \eqref{eq2invParaboliclast} are given by the application of $\big(\rho^{1/2}(t)D_{\bx}+\mathfrak{f}\big(\,^{C}\partial_t^{\alpha,\psi}\big)+\mathfrak{f}^{+}\big)$ to the following representations respectively: 
\begin{align*}
w(\bx,t)-w_1(\bx)=\int_{\mathbb{R}^n}\mathcal{F}^{-1}_{\bs}\big(\mathcal{H}_0(t,|\bs|^2,\rho)\big)(\bx-\by)\big(\chi_{\Omega} w_1\big)(\by){\rm d}\by,
\end{align*}  
and 
\begin{align*}
v(\bx,t)-\Delta_{\bx} w_1(\bx)=\int_{\mathbb{R}^n}\mathcal{F}^{-1}_{\bs}\big(\mathcal{H}_0(t,|\bs|^2,\rho)\big)(\bx-\by)\big(\chi_{\Omega}\Delta_{\by}w_1\big)(\by){\rm d}\by.
\end{align*}
By the additional data at the point $\bq\in\Omega$, we also have
\begin{align*}
h_1(t)=\int_{\Omega}\mathcal{F}^{-1}_{\bs}\big(\mathcal{H}_0(t,|\bs|^2,\rho)\big)(\bx-\by)|_{_{\bx=\bq}}w_1(\by){\rm d}\by
\end{align*}  
and 
\begin{align*}
h_2(t)=\int_{\Omega}\mathcal{F}^{-1}_{\bs}\big(\mathcal{H}_0(t,|\bs|^2,\rho)\big)(\bx-\by)|_{_{\bx=\bq}}\Delta_{\by} w_1(\by){\rm d}\by+\Delta_{\bx} w_1(\bx)|_{_{_{\bx=\bq}}}.
\end{align*}  
By \eqref{eq1Cauchylast1}, \eqref{transsolution}, the definition of $h_1$ and $h_2$ at the point $\bx=\bq$, and by the Green second formula we get the following equivalences:
\begin{align*}
&D^{\alpha,\psi}_{0+}h_1(t)=\,^{C}\partial_t^{\alpha,\psi}w(\bx,t)|_{_{_{\bx=\bq}}} \\
&=\rho(t)\Delta_{\bx}\left(\int_{\Omega}\mathcal{F}^{-1}_{\bs}\big(\mathcal{H}_0(t,|\bs|^2,\rho)\big)(\bx-\by)|_{_{\bx=\bq}}w_1(\by){\rm d}\by+w_1(\bx)|_{_{_{\bx=\bq}}}\right) \\
&=\rho(t)\left(\int_{\Omega}\Delta_{\by} \left(\mathcal{F}^{-1}_{\bs}\big(\mathcal{H}_0(t,|\bs|^2,\rho)\big)(\bx-\by)|_{_{\bx=\bq}}\right)w_1(\by){\rm d}\by+\Delta_{\bx} w_1(\bx)|_{_{_{\bx=\bq}}}\right) \\
&=\rho(t)\left(\int_{\Omega}\mathcal{F}^{-1}_{\bs}\big(\mathcal{H}_0(t,|\bs|^2,\rho)\big)(\bx-\by)|_{_{\bx=\bq}}\Delta_{\by}w_1(\by){\rm d}\by+\Delta_{\bx} w_1(\bx)|_{_{_{\bx=\bq}}}\right) \\
&=\rho(t)\big(v(\bx,t)|_{_{_{\bx=\bq}}}\big)=\rho(t)h_2(t),
\end{align*}    
which finishes the proof.
\end{proof}

\subsection{ Examples} Let us consider the particular case $1<\beta_0<2$ and $\psi(t)=t$ of Theorem \ref{inversewavelast}. We have
\begin{equation}\label{eq1Cauchyinv1}
\left\{ \begin{split}
\big(t^{\beta_{0}/2}D_{\bx}+\mathfrak{f}\big(\,^{C}\partial_t^{\beta_0}\big)+\mathfrak{f}^{+}\big)w(\bx,t)&=0, 
\qquad \bx \in \mathbb{R}^{n}, \quad 0<t\leqslant T<+\infty,\\
w(\bx,t)|_{_{_{t=0}}}&=0, \\
\partial_t w(\bx,t)|_{_{_{t=0}}}&=w_1(\bx), \\
w(\bx,t)|_{_{_{\bx=\bq}}}-w_1(\bx)|_{_{_{\bx=\bq}}}t&=h_1(t),\quad \bq\in\Omega,
\end{split}
\right.
\end{equation}
and
\begin{equation}\label{eq1Cauchyinv2}
\left\{ \begin{split}
\big(t^{\beta_{0}/2}D_{\bx}+\mathfrak{f}\big(\,^{C}\partial_t^{\beta_0}\big)+\mathfrak{f}^{+}\big)v(\bx,t)&=0, 
\qquad \bx \in \mathbb R^{n}, \quad 0<t\leqslant T<+\infty,\\
v(\bx,t)|_{_{_{t=0}}}&=0, \\
\partial_t v(\bx,t)|_{_{_{t=0}}}&=\Delta_{\bx} w_1(\bx),\\
v(\bx,t)|_{_{_{\bx=\bq}}}&=h_2(t),\quad \bq\in\Omega, 
\end{split}
\right.
\end{equation}
where $\text{supp}(w_1)\subset\Omega\subset\mathbb{R}^n$. The solutions of \eqref{eq1Cauchyinv1} and \eqref{eq1Cauchyinv2} are given by the application of $\big(t^{\beta_{0}/2}D_{\bx}+\mathfrak{f}\big(\,^{C}\partial_t^{\beta_0}\big)+\mathfrak{f}^{+}\big)$ to the following representations respectively (see formula \eqref{ex1}):
\begin{align}\label{sosinex1}
w(\bx,t)&=w_1(\bx)t \nonumber\\ 
&-\int_{\mathbb{R}^n}\mathcal{F}^{-1}_{\bs}\big(I_{0+}^{\beta_0}\big(|\bs|^{2}t^{\beta_0+1}E^{\beta_0}_{1,2\beta_0,\beta_0+1}(-|\bs|^{2}t^{2\beta_0})\big)(\bx-\by)w_1(\by){\rm d}\by,
\end{align}
and 
\begin{align}\label{sosinex2}
v(\bx,t)&=\Delta_{\bx} w_1(\bx)t \nonumber\\
&-\int_{\mathbb{R}^n}\mathcal{F}^{-1}_{\bs}\big(I_{0+}^{\beta_0}\big(|\bs|^{2}t^{\beta_0+1}E^{\beta_0}_{1,2\beta_0,\beta_0+1}(-|\bs|^{2}t^{2\beta_0})\big)(\bx-\by)\Delta_{\by}w_1(\by){\rm d}\by.
\end{align}
Further, by Theorem \ref{dirl} and \eqref{sosinex1} we obtain
\[D_{0+}^{\beta_0}h_1(t)=t^{\beta_{0}+1}\int_{\Omega}w_1(\by)\left(\mathcal{F}^{-1}_{\bs}\big(|\bs|^{2}E_{1,2\beta_{0},\beta_{0}+1}^{\beta_0}(-|\bs|^{2}t^{2\beta_0})\big)(\bx-\by)\right){\rm d}\by.\]
Since \eqref{sosinex1} is the solution of \eqref{example1*}, we get
\[D_{0+}^{\beta_0}h_1(t)=\,^{C}\partial_t^{\beta_0}w(\bx,t)|_{_{_{\bx=\bq}}}=t^{\beta_0}\Delta_{\bx} w(\bx,t)|_{_{_{\bx=\bq}}}.\]
On the other hand, we get
\begin{align*}
-h_2&(t)=-\Delta_{\bx} w_1(\bx)|_{_{_{\bx=\bq}}}t\\
&+\int_{\Omega}\Delta_{\by} w_1(\by)\mathcal{F}^{-1}_{\bs}\big(|\bs|^{2}I_{0+}^{\beta_0}\left(t^{\beta_0+1}E_{1,2\beta_0,\beta_0+1}^{\beta_0}(-|\bs|^{2}t^{2\beta_0})\big)(\bx-\by)|_{_{_{\bx=\bq}}}\right){\rm d}\by. 
\end{align*}
Now, applying Green's second formula and \eqref{sosinex1} we get 
\begin{align*}
-h_2(t)&=-\Delta_{\bx} w_1(\bx)|_{_{_{\bx=\bq}}}t\\
&\hspace{-0.5cm}+\int_{\Omega}\Delta_{\by} w_1(\by)\mathcal{F}^{-1}_{\bs}\big(|\bs|^{2}I_{0+}^{\beta_0}\left(t^{\beta_0+1}E_{1,2\beta_0,\beta_0+1}^{\beta_0}(-|\bs|^{2}t^{2\beta_0})\big)(\bx-\by)|_{_{_{\bx=\bq}}}\right){\rm d}\by \\
&=-\Delta_{\bx} w_1(\bx)|_{_{_{\bx=\bq}}}t\\
&\hspace{-0.5cm}+\int_{\Omega}w_1(\by)\Delta_{\by}\mathcal{F}^{-1}_{\bs}\big(|\bs|^{2}I_{0+}^{\beta_0}\left(t^{\beta_0+1}E_{1,2\beta_0,\beta_0+1}^{\beta_0}(-|\bs|^{2}t^{2\beta_0})\big)(\bx-\by)|_{_{_{\bx=\bq}}}\right){\rm d}\by \\
&=-\Delta_{\bx}w(\bx,t)|_{_{_{\bx=\bq}}}.\end{align*}
Hence 
\[\rho(t)=t^{\beta_0}=\frac{D_{0+}^{\beta_0}h_1(t)}{h_2(t)}=\frac{t^{\beta_0}\Delta_{\bx}w(\bx,t)|_{_{_{\bx=\bq}}}}{\Delta_{\bx}w(\bx,t)|_{_{_{\bx=\bq}}}}.\]




\section*{acknowledgements}
The first and second authors were supported by the FWO Odysseus 1 grant G.0H94.18N: Analysis and Partial Differential Equations and by the Methusalem programme of the Ghent University Special Research Fund (BOF) (Grant number 01M01021). The second author is also supported by EPSRC grant EP/R003025/2 and FWO Senior Research Grant G011522N. The third author is supported by the Committee of Science of the Ministry of Science and Higher Education of the Republic of Kazakhstan (Grant No. BR21882172). The third author is also supported by Nazarbayev University under grants 20122022CRP1601 and 20122022FD4105.

 \section*{\small
 Conflict of interest} 

 {\small
 The authors declare that they have no conflict of interest.}




\end{document}